\newtheorem{theorem}{Theorem}[section]
\newtheorem{claim}{Claim}
\newtheorem{conjecture}[theorem]{Conjecture}
\newtheorem{corollary}[theorem]{Corollary}
\newtheorem{definition}{Definition}
\newtheorem{lemma}[theorem]{Lemma}
\newtheorem{proposition}[theorem]{Proposition}
\newenvironment{proof}[1][Proof.]{\begin{trivlist}
\item[\hskip \labelsep {\bfseries #1}]}{\end{trivlist}}
\newenvironment{acknowledgement}[1][Acknowledgement]{\begin{trivlist}
\item[\hskip \labelsep {\bfseries #1}]}{\end{trivlist}}
\newcommand{\AmS}{{\protect\the\textfont2
  A\kern-.1667em\lower.5ex\hbox{M}\kern-.125emS}}
\definecolor{blau}{rgb}{0.1,0.0,0.9}
\definecolor{gruen}{cmyk}{1.0,0.2,0.7,0.07}
\definecolor{mag}{cmyk}{0.0,0.9,0.3,0.0}
\title{Cyclic Deficiency of Graphs}
\author{Armen S. Asratian\address[MCSD]{Department of Mathematics, Link\"oping University,\\
SE-581 83 Link\"oping, Sweden}\thanks{email: armen.asratian@liu.se},
    Carl Johan Casselgren\addressmark[MCSD]\thanks{email: carl.johan.casselgren@liu.se},
    Petros A. Petrosyan\address{Department of Informatics and Applied Mathematics,\\
Yerevan State University, 0025, Armenia}%
\address{Institute for Informatics and Automation Problems,\\
National Academy of Sciences, 0014, Armenia}%
\thanks{email: petros\_petrosyan@ysu.am, pet\_petros@ipia.sci.am}}
\begin{document}

\maketitle

\begin{abstract}
A proper edge  coloring of a graph $G$ with colors $1,2,\dots,t$ is
called a \emph{cyclic interval $t$-coloring} if for each vertex $v$
of $G$ the edges incident to $v$ are colored by consecutive colors,
under the condition that color $1$ is considered as consecutive to
color $t$. 
In this paper we introduce and investigate a new notion, the {\em cyclic deficiency}
of a graph $G$, defined as the minimum number of pendant edges whose
attachment to $G$ yields a graph admitting a cyclic interval coloring; this number can be considered as a measure of closeness of $G$ of being cyclically interval colorable.
We determine or bound the cyclic deficiency of several
 families
of graphs. 
In particular, we present
examples of graphs of bounded maximum degree
with arbitrarily large cyclic deficiency, and graphs whose cyclic
deficiency approaches the number of vertices.
Finally, we conjecture that the cyclic deficiency of any graph does not
exceed the number of vertices, and we present several results
supporting this conjecture. 
\\

Keywords: edge coloring, interval edge coloring, cyclic interval edge
coloring, deficiency, cyclic deficiency
\end{abstract}

		\bigskip

\section{Introduction}

 We use \cite{West} for terminology and notation not defined here. 
All graphs in this paper are
finite, undirected, and contain no multiple edges or loops;
a multigraph may have both multiple edges and loops. 
   $V(G)$ and $E(G)$ denote the sets of vertices and
edges of a graph $G$, respectively.
A proper $t$-edge coloring of a graph $G$ is a
    mapping $\alpha:E(G)\longrightarrow  \{1,\dots,t\}$ such that
    $\alpha(e)\not=\alpha(e')$ for every pair of adjacent
    edges $e$ and $e'$ in $G$.
  
A proper $t$-edge coloring of a graph $G$ is called an 
\emph{interval $t$-coloring} if the colors of the edges incident to every
vertex $v$ of $G$ form an interval of integers.
 This notion was introduced by 
Asratian and Kamalian \cite{AsrKam} 
(available in English as \cite{AsrKamJCTB}), 
motivated by the problem of constructing timetables
 without ``gaps'' for teachers and classes.
Generally, it is an NP-complete problem
to determine whether a bipartite graph
has an interval coloring \cite{Seva}. However some classes of 
graphs have been proved to admit interval colorings; it is known,
for example, that trees, regular and complete bipartite graphs
\cite{AsrKam,Hansen,Kampreprint},  doubly convex bipartite graphs
\cite{AsrDenHag,KamDiss}, grids \cite{GiaroKubale1}, 
and 
outerplanar bipartite graphs \cite{GiaroKubale2} have interval colorings. 
Additionally, all $(2,b)$-biregular graphs \cite{Hansen,HansonLotenToft,KamMir} and
 $(3,6)$-biregular graphs \cite{CarlJToft} admit interval colorings, where an
\emph{$(a,b)$-biregular} graph is a bipartite graph
where the vertices in one part all have degree $a$ and the vertices
in the other part all have degree $b$.

In \cite{GiaroKubaleMalaf1,GiaroKubaleMalaf2} Giaro et al. introduced and investigated 
the {\em deficiency} $\text{def}(G)$ of a graph $G$ defined
as the minimum number
of pendant edges whose attachment to $G$ makes the resulting graph
 interval colorable.
In \cite{GiaroKubaleMalaf1} it was proved that
there are bipartite graphs 
whose deficiency approaches the number of vertices,
and in
\cite{GiaroKubaleMalaf2} it was
proved that if $G$ is an $r$-regular graph with an odd number of vertices,
then $\text{def}(G) \geq \frac{r}{2}$. Furthermore in \cite{GiaroKubaleMalaf2} the deficiency of 
complete graphs, wheels and broken wheels was determined. 
Schwartz \cite{Schwartz} obtained tight bounds on the deficiency of some
families of regular graphs.
Recently, Petrosyan and Khachatrian \cite{PetrosHrant}
proved that for near-complete graphs
$\mathrm{def}(K_{2n+1}-e)=n-1$ (where $e$ is an edge of $K_{2n+1}$),
thereby confirming a conjecture of Borowiecka-Olszewska et al. 
\cite{B-OD-BHal}.
Further results on deficiency appear in
\cite{AltinCaporHertz,B-OD-BHal,B-OD-B,BouchHertzDesau,FengHuang,KhachOuterplanar,PetrosHrant}. 



Another type of proper $t$-edge colorings, a \emph{cyclic interval
$t$-coloring}, was introduced by de Werra and Solot
\cite{deWerraSolot}. A proper $t$-edge coloring
$\alpha:E(G)\longrightarrow  \{1,\dots,t\}$ of a graph $G$  is
called a \emph{cyclic interval $t$-coloring} if the colors of the edges
incident to every vertex $v$ of $G$ either form an interval of
integers or the set $\{1,\ldots,t\}\setminus \{\alpha(e): \text{$e$
is incident to $v$}\}$ is an interval of integers. 
This notion was
motivated by scheduling problems arising in flexible manufacturing
systems, in particular the so-called \emph{cylindrical open shop
scheduling problem}. Clearly, any interval $t$-coloring of a graph
$G$ is also a cyclic interval $t$-coloring. Therefore all above
mentioned classes of graphs which admit interval edge colorings,
also admit cyclic interval colorings. 
 
Generally, the problem of
determining whether a given bipartite graph admits a cyclic interval
coloring is $NP$-complete \cite{KubaleNadol}. 
Nevertheless,
all graphs with maximum degree at most $3$ \cite{Nadol}, complete multipartite graphs
\cite{AsratianCasselgrenPetrosyan}, outerplanar bipartite graphs
\cite{deWerraSolot}, bipartite graphs with maximum degree $4$ 
and Eulerian bipartite graphs with maximum degree not exceeding $8$
\cite{AsratianCasselgrenPetrosyan}, 
and some families of biregular bipartite graphs 
\cite{CarlJToft,CasselgrenPetrosyanToft,AsratianCasselgrenPetrosyan}
admit cyclic interval colorings.

\medskip

In this paper we introduce  and investigate a new notion,
the {\em cyclic deficiency}
 of a graph $G$, denoted by $\text{def}_c(G)$, defined as the minimum number of
pendant edges that has to be attached  to $G$ in order to obtain a  graph admitting a cyclic interval coloring.
This number can be considered as a measure of closeness of a graph $G$ of being cyclically interval colorable.
Clearly, $\text{def}_c(G) =0$ if and only if $G$
admits a cyclic interval coloring.
Hence, there are infinite families of graphs with cyclic deficiency
$0$ but arbitrarily large deficiency, e.g. regular graphs
with an odd number of vertices.

We present examples of graphs of bounded maximum degree
with arbitrarily large cyclic deficiency, and graphs whose cyclic
deficiency approaches the number of vertices. 
We determine or bound the cyclic deficiency of several families
of graphs 
and describe three different methods
 for constructing graphs with large cyclic deficiency.
Our constructions generalize earlier constructions by
Hertz, Erd\H{o}s,
Malafiejski 
and Sevastjanov
(see. e.g. \cite{GiaroKubaleMalaf1,JensenToft,GiaroKubaleMalaf2}).
Since the inequality $\text{def}_c(G) \leq \text{def}(G)$
holds for any graph $G$,
our constructions yield new classes of graphs with large deficiency.
Finally, we conjecture that the cyclic deficiency of any graph does not
exceed the number of vertices, and we present several results
supporting this conjecture. In particular we prove that the conjecture is true 
for graphs with maximum degree at most $5$,
bipartite graphs with maximum degree at most $8$, and graphs 
where the difference between 
maximum and minimum degrees does not exceed $2$.

\bigskip

\section{Definitions and auxiliary results}      

	A graph $G$ is \emph{cyclically interval
colorable} if it has a cyclic interval $t$-coloring for some
positive integer $t$. The set of all cyclically interval colorable
graphs is denoted by $\mathfrak{N}_{c}$. For a graph $G\in
\mathfrak{N}_{c}$, the maximum number of colors in a cyclic interval coloring of 
$G$ is denoted by $W_{c}(G)$.

The degree of a 
	vertex $v$ of a graph  $G$ is denoted by $d_G(v)$.
	 $\Delta(G)$ and $\delta(G)$ denote the maximum and minimum
degrees of $G$, respectively. A graph $G$ is \emph{even} if the degree of every vertex of $G$ is
even. The diameter of a graph $G$ we denote by $\mathrm{diam}(G)$.

We shall need a classic result from factor theory.
A \emph{$2$-factor} of a multigraph $G$ (where loops are allowed) is a
$2$-regular spanning subgraph of $G$.
%
\begin{theorem} (Petersen's Theorem).
\label{th:Petersen}
 Let $G$ be a $2r$-regular
multigraph (where loops are allowed). Then $G$ has a decomposition
into edge-disjoint $2$-factors.
\end{theorem}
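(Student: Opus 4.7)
The plan is to reduce the statement to König's edge coloring theorem for regular bipartite graphs via an Eulerian detour. It suffices to prove the theorem for each connected component separately. Since every vertex has the even degree $2r$, each component admits an Eulerian circuit $C$, where a loop at a vertex $v$ is traversed exactly once and accounts for two units of $d_G(v)$.

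First, I would orient the edges of $G$ along $C$: an edge $uv$ traversed from $u$ to $v$ becomes the arc $u\to v$, and a loop at $v$ becomes one out-arc and one in-arc at $v$. Since $C$ enters and leaves every vertex the same number of times, each vertex then carries exactly $r$ out-arcs and $r$ in-arcs.

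Next, I would construct an auxiliary bipartite multigraph $H$ with parts $V^{+}=\{v^{+}:v\in V(G)\}$ and $V^{-}=\{v^{-}:v\in V(G)\}$: every arc $u\to v$ of the oriented $G$ contributes an edge $u^{+}v^{-}$ in $H$ (so a loop at $v$ becomes an edge $v^{+}v^{-}$, and parallel edges of $G$ become parallel edges of $H$). Every vertex of $H$ has degree exactly $r$, so $H$ is an $r$-regular bipartite multigraph. By König's edge coloring theorem, $E(H)$ decomposes into $r$ perfect matchings $M_1,\dots,M_r$.

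Finally, I would translate each $M_i$ back into the spanning subgraph $F_i\subseteq G$ consisting of the underlying edges of the arcs in $M_i$. For any vertex $v$, $M_i$ covers both $v^{+}$ and $v^{-}$ exactly once, so $v$ is incident to precisely two edges of $F_i$; in particular, if $M_i$ uses the edge $v^{+}v^{-}$ coming from a loop at $v$, that single loop contributes $2$ to $d_{F_i}(v)$, which is exactly what is required. Thus each $F_i$ is a $2$-factor of $G$, and $F_1,\dots,F_r$ partition $E(G)$. The only genuinely delicate point is the bookkeeping for loops and multiple edges in the passage between $G$ and $H$; the substantive mathematical content is carried by König's theorem applied to $H$.
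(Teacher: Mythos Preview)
Your argument is correct and is in fact the standard modern proof of Petersen's theorem: reduce to connected components, take an Eulerian orientation so that every vertex has in-degree and out-degree $r$, pass to the $r$-regular bipartite split graph, decompose into $r$ perfect matchings via K\"onig's theorem, and pull each matching back to a $2$-factor. The handling of loops and multiple edges is done correctly.

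There is nothing to compare against, however: the paper does not prove Theorem~\ref{th:Petersen}. It is quoted as a classical result from factor theory and used as a black box (in the proof of Proposition~\ref{prop:upperbound} and Proposition~\ref{prop:bipdiff4}) without any argument supplied. So your proposal is not an alternative to the paper's proof; it simply fills in a proof the paper omits.
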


The {\em chromatic index $\chi'(G)$} of a graph $G$
	is the minimum number  $t$ for which there exists  a proper
	$t$-edge coloring of $G$. 
	
	
	\begin{theorem} (Vizing's Theorem).
\label{th:Vizing}
For any 
graph $G$, $\chi'(G) = \Delta(G)$
	or $\chi'(G) = \Delta(G) +1$. 
	\end{theorem}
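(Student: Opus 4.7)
The plan is to prove the nontrivial direction $\chi'(G) \leq \Delta(G)+1$, since the lower bound $\chi'(G) \geq \Delta(G)$ is immediate from the fact that the $\Delta(G)$ edges incident to a vertex of maximum degree must receive pairwise distinct colors. I would proceed by induction on $|E(G)|$: remove a single edge $xy_1$, apply the induction hypothesis to obtain a proper edge coloring of $G - xy_1$ using $\Delta(G)+1$ colors, and then extend this coloring to $xy_1$ by a local recoloring argument.

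The central tool is the Vizing fan combined with a Kempe chain swap. Since every vertex $v$ has degree at most $\Delta(G)$ but we have $\Delta(G)+1$ colors available, there is at least one color \emph{missing} at $v$ in the partial coloring. I would build a maximal sequence $y_1, y_2, \ldots, y_k$ of distinct neighbors of $x$ (the \emph{fan}) such that for each $i \geq 1$, the edge $xy_{i+1}$ is colored with a color missing at $y_i$. If at some stage the color missing at $y_i$ is also missing at $x$, then a simple shift of the fan colors $\alpha(xy_{j+1}) \mapsto xy_j$ for $j = 1, \ldots, i$ yields a proper coloring that extends to $xy_1$.

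Otherwise, let $\alpha$ be a color missing at $x$ and $\beta$ a color missing at $y_k$, and consider the $(\alpha,\beta)$-Kempe chain (maximal alternating path/cycle in colors $\alpha$ and $\beta$) starting at $x$. The key observation is that swapping colors along this chain preserves properness, and by analyzing where the chain terminates relative to the fan vertices $y_i$, one of two cases occurs: either the chain reaches $y_k$, in which case after swapping, the fan-shifting argument applies; or the chain reaches some earlier $y_i$ for which $\alpha$ was the color of $xy_{i+1}$, in which case a truncated fan-shift followed by the swap works. Verifying that one of these two cases always occurs, and that the resulting recoloring is proper, is the main technical step.

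The main obstacle I expect is the careful bookkeeping in the fan/Kempe step: one must argue that the fan can always be extended until either a missing color repeats or the Kempe chain argument terminates, and that the proposed recoloring never creates a color conflict at $x$, at the $y_i$, or along the alternating chain. All other steps (induction setup, existence of a missing color at each vertex, lower bound) are routine; the combinatorial case analysis of where the $(\alpha,\beta)$-chain lands is where the real content of Vizing's argument lies.
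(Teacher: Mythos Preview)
The paper does not supply a proof of this statement at all: Vizing's Theorem is quoted as a classical result and simply cited from the literature (reference \cite{Vizing}). Your outline is the standard Vizing fan / Kempe chain argument and is essentially the original proof; there is nothing in the paper to compare it against.
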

	A graph $G$  is said to be \emph{Class $1$} if $\chi'(G) = \Delta(G)$, and  \emph{Class $2$}
	 if $\chi'(G) = \Delta(G) +1$.

The next result gives a sufficient condition for a graph to be Class 1 (see, for example,  \cite{Fournier}).
\begin{theorem}
\label{th:Fournier}
	If $G$ is a graph where no two vertices of maximum degree are adjacent,
	then $G$ is Class 1.
\end{theorem}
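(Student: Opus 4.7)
The plan is to argue by contradiction via a minimum counterexample, reducing the statement to the classical Vizing Adjacency Lemma. Suppose, for contradiction, that there is a graph $G$ satisfying the hypothesis with $\chi'(G) = \Delta(G)+1$, and choose such $G$ with $|E(G)|$ minimum. Then $G$ is \emph{edge-critical}: for every edge $e \in E(G)$, the subgraph $G - e$ admits a proper $\Delta(G)$-edge coloring, while $G$ itself does not. Moreover, by minimality the conclusion fails only if $\Delta(G - e)$ is still $\Delta(G)$ for all $e$, which forces every edge to be incident to at least one vertex of maximum degree.

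The main step is to establish the following consequence of Vizing's fan argument (Vizing's Adjacency Lemma in its weakest form): \emph{in an edge-critical graph $G$ with $\chi'(G) = \Delta(G)+1$, every vertex has a neighbor of maximum degree.} I would prove this by a fan construction: fix a vertex $u$ with a neighbor $v_1$, and take a proper $\Delta(G)$-edge coloring $\varphi$ of $G - uv_1$. If $d_G(u) < \Delta(G)$, there is a color missing at $u$, and using that a color missing at $u$ and a color missing at $v_1$ can be exchanged along a Kempe chain, one extends $\varphi$ to $G$, a contradiction. Thus $d_G(u) = \Delta(G)$; but for the corresponding argument applied with $u$ replaced by any vertex $w$, one builds a fan $v_1, v_2, \dots, v_k$ of neighbors of $w$ and uses the fact that at each step the missing color at $v_i$ must already appear at $w$, so that some $v_i$ must be of maximum degree (else a color would be missing at some $v_i$ and at $w$ simultaneously, allowing an extension). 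This is the heart of the proof and the step that requires real work.

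Granting that claim, the conclusion is immediate: pick any vertex $w$ of maximum degree in $G$ (which exists since $V(G) \neq \emptyset$). By the adjacency lemma, $w$ has a neighbor $w'$ of maximum degree, so two adjacent vertices of $G$ both have degree $\Delta(G)$, contradicting the hypothesis. Hence no such counterexample exists and $\chi'(G) = \Delta(G)$, i.e., $G$ is Class $1$.

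The principal obstacle is the fan/Kempe-chain argument underlying the adjacency lemma; the rest of the proof is a short contradiction once that tool is in place. Since the paper only invokes Fournier's theorem as a citation to \cite{Fournier}, in the write-up I would state the adjacency lemma as the key ingredient and refer to the standard source for the fan argument, keeping the main text focused on deducing the Class $1$ conclusion from it.
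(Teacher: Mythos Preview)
The paper does not give a proof of this theorem at all; it merely states it as a known result with a citation to \cite{Fournier}. Your approach via the Vizing Adjacency Lemma is the standard one and is correct: once you know that every vertex of an edge-critical Class~2 graph has a neighbor of maximum degree, applying this to any maximum-degree vertex immediately yields two adjacent maximum-degree vertices, contradicting the hypothesis.

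Two small remarks on the write-up. Your minimum-counterexample reduction works, but the sentence ``the conclusion fails only if $\Delta(G-e)$ is still $\Delta(G)$ for all $e$'' is oddly phrased; the clean dichotomy is that if $\Delta(G-e)=\Delta(G)$ then $G-e$ inherits the hypothesis (two adjacent vertices of degree $\Delta(G)$ in $G-e$ would also have degree $\Delta(G)$ in $G$), while if $\Delta(G-e)<\Delta(G)$ then Vizing's theorem already gives $\chi'(G-e)\le\Delta(G-e)+1\le\Delta(G)$, so in either case $G-e$ is $\Delta(G)$-colorable and $G$ is critical. Second, your sketch of the fan argument itself is rough (for instance, ``Thus $d_G(u)=\Delta(G)$'' does not follow from what precedes it), but since you plan to cite the Adjacency Lemma rather than reprove it---which matches the paper's own treatment of Fournier's theorem as a black box---this is not a problem.
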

We denote by $\mathbb{N}$ the the set of all positive integers. 
If $t\in \mathbb{N}$ and $A$ is a subset of the set $\{1,\ldots,t\}$, then $A$ is called a {\em cyclic interval modulo $t$} if either $A$ or $\{1,\dots,t\} \setminus A$
is an interval of integers. The {\em deficiency modulo $t$}
of a given set $A$ of integers is the minimum size of a set
$B$ of positive integers such that $A \cup B$ is a cyclic interval
modulo $t$. A set of positive integers $A$ is called {\em near-cyclic modulo $t$}
(or just {\em near-cyclic}) if there is an integer $k$ such that $A \cup \{k\}$ is a cyclic
interval modulo $t$.

If $\alpha $ is a proper edge coloring of $G$ and $v\in V(G)$, then
$S_{G}\left(v,\alpha \right)$ (or $S\left(v,\alpha \right)$) denotes
the set of colors appearing on edges incident to $v$.


\begin{definition}
	{\em For a given graph $G$ and a proper $t$-edge coloring $\alpha$ of $G$,
	the {\em cyclic deficiency of $\alpha$ at a vertex} $v \in V(G)$, denoted
	$\text{def}_c(v,\alpha)$ is the deficiency modulo $t$ of the set $S\left(v,\alpha \right)$. The cyclic deficiency $\text{def}_c(G,\alpha)$
	of the edge coloring $\alpha$ is the sum of cyclic 
	deficiencies of all vertices in $G$.
	The  cyclic deficiency $\text{def}_c(G)$ of $G$
	can then be defined as
	the minimum of $\text{def}_c(G,\alpha)$ taken over all proper 
	edge colorings $\alpha$ of $G$.}
\end{definition}

\begin{figure}[h]
\begin{center}
\includegraphics[height=17pc]{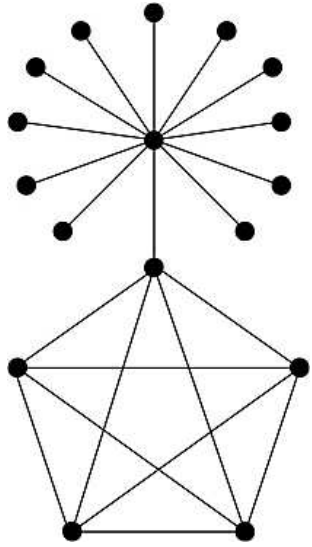}\\
\caption{A cyclically interval non-colorable graph with $17$ vertices.}
\label{fig}
\end{center}
\end{figure}

Clearly, $\mathrm{def}_c(G)=0$ if and only if $G$ has a cyclic interval coloring. 
In particular, this implies that the problem of computing the cyclic deficiency of a given graph is $NP$-complete. 

In \cite{Kubale} it was shown that among all connected graphs with at most $6$ vertices there are only seven Class 1 graphs without interval colorings. On the other hand, in \cite{BeinekeWilson} it was shown that among all connected graphs with at most $6$ vertices there are only eight Class 2 graphs. For all these graphs we constructed cyclic interval colorings, so all connected graphs with at most $6$ vertices
are cyclically interval colorable. However, there exists a connected graph 
 with $17$ vertices that has no cyclic interval coloring \cite{PetMkhitaryan}. (See Figure \ref{fig})\\

%

Finally, we need the notion of a {\em projective plane}.

\begin{definition}\label{def:project}
{\em A {\em finite projective
plane $\pi(n)$ of order $n$} ($n\geq 2$) has $n^{2}+n+1$ points and
$n^{2}+n+1$ lines, and satisfies the following properties:

\begin{description}
\item[P1] any two points determine a line;

\item[P2] any two lines determine a point;

\item[P3] every point is incident to $n+1$ lines;

\item[P4] every line is incident to $n+1$ points.
\end{description}
}
\end{definition}

\bigskip

\section{Comparison of deficiency and cyclic deficiency}\


Clearly, $\mathrm{def}_{c}(G) \leq \mathrm{def}(G)$ for every graph $G$, 
since any interval coloring of $G$ is also
a cyclic interval coloring of $G$.
In particular, $\mathrm{def}_{c}(G)=\mathrm{def}(G)=0$ for every graph $G$ which admits an interval coloring. 


In this section we will show that the difference between 
the deficiency and cyclic deficiency
can be arbitrarily large, even for graphs with large deficiency.
 
\begin{figure}[h]
\begin{center}
\includegraphics[width=20pc]{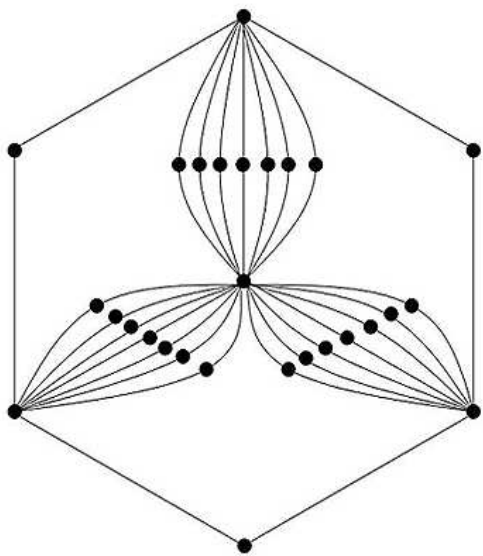}\\
\caption{The graph $S_{7,7,7}$.}\label{fig1}
\end{center}
\end{figure}
 
%
We begin by considering generalizations of
two families of bipartite graphs with 
large deficiency introduced by Giaro et al. \cite{GiaroKubaleMalaf1}. 
%
For any $a,b,c\in \mathbb{N}$, define the graph $S_{a,b,c}$ as follows:
\begin{center}
$V(S_{a,b,c})=\{u_{0},u_{1},u_{2},u_{3},v_{1},v_{2},v_{3}\}\cup \{x_{1},\ldots, x_{a},y_{1},\ldots, y_{b},z_{1},\ldots,z_{c}\}$ and

\medskip

$E(S_{a,b,c})=\{u_{1}v_{1},v_{1}u_{2},u_{2}v_{2},v_{2}u_{3},u_{3}v_{3},v_{3}u_{1}\}\cup \{u_{0}x_{i},u_{1}x_{i}:1\leq i\leq a\}$\\
$\cup\{u_{0}y_{j},u_{2}y_{j}:1\leq j\leq b\}\cup\{u_{0}z_{k},u_{3}z_{k}:1\leq k\leq c\}$.
\end{center}
Figure \ref{fig1} shows the graph $S_{7,7,7}$.

Next we define a family of graphs $M_{a,b,c}$  ($a,b,c\in \mathbb{N}$).
We set
\begin{center}
$V(M_{a,b,c})=\{u_{0},u_{1},u_{2},u_{3}\}\cup \{x_{1},\ldots, x_{a},y_{1},\ldots, y_{b},z_{1},\ldots,z_{c}\}$ and 

$E(M_{a,b,c})=\{u_{0}x_{i},u_{1}x_{i},u_{2}x_{i}:1\leq i\leq a\}\cup\{u_{0}y_{j},u_{2}y_{j},u_{3}y_{j}:1\leq j\leq b\}$\\
$\cup\{u_{0}z_{k},u_{3}z_{k},u_{1}z_{k}:1\leq k\leq c\}$.
\end{center}
Figure \ref{fig2} shows the graph $M_{5,5,5}$.

Clearly, $S_{a,b,c}$ and $M_{a,b,c}$ are connected bipartite graphs. 
Giaro et al.  \cite{GiaroKubaleMalaf1} showed that 
the graphs $S_k=S_{k,k,k}$ and $M_k=M_{k,k,k}$ 
satisfy
$\mathrm{def}(S_{k})\geq k-6$ and $\mathrm{def}(M_{k})\geq k-4$ 
for each $k\geq 6$;
that is, the deficiencies of $S_{k}$ and $M_{k}$ 
grow with the number of vertices. By contrast, we prove that all graphs
in the families $\{S_{a,b,c}\}$ and $\{M_{a,b,c}\}$ have cyclic deficiency $0$.

\begin{figure}[h]
\begin{center}
\includegraphics[width=25pc]{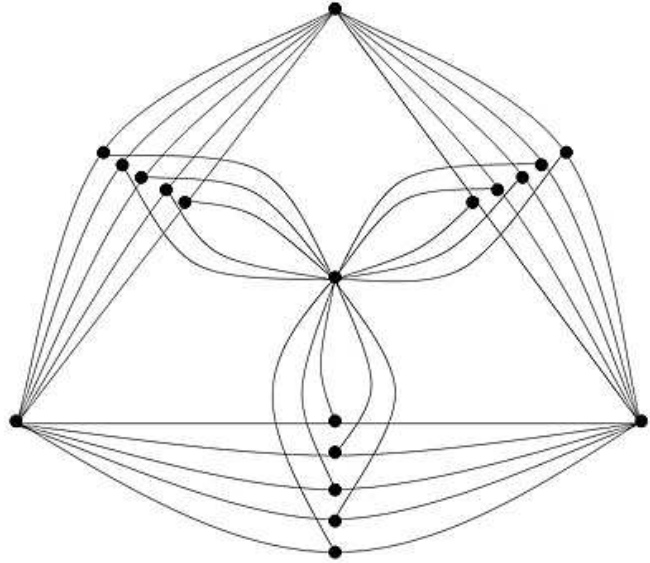}\\
\caption{The graph $M_{5,5,5}$.}\label{fig2}
\end{center}
\end{figure}

\begin{theorem}
\label{mytheorem2.1.1} For any $a,b,c\in \mathbb{N}$, $\mathrm{def}_{c}(S_{a,b,c})=\mathrm{def}_{c}(M_{a,b,c})=0$.
\end{theorem}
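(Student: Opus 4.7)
My plan is to prove both equalities by exhibiting explicit cyclic interval colorings. In both families, I color the edges at the central hub $u_0$ by $\alpha(u_0 x_i) = i$, $\alpha(u_0 y_j) = a+j$, $\alpha(u_0 z_k) = a+b+k$, so that $u_0$'s set of incident colors is the interval $\{1, \ldots, a+b+c\}$. The challenge is then to color the remaining edges so that the cyclic interval property holds at every other vertex.

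For $S_{a,b,c}$ I would use $t = a+b+c$ colors. I set $\alpha(u_1 x_i) = i+1$, $\alpha(u_2 y_j) = a+j+1$, $\alpha(u_3 z_k) = a+b+k+1 \pmod{t}$ (the last wraps to $1$ when $k=c$), and color the six edges of the outer $v$-cycle by $\alpha(u_1 v_1) = a+2$, $\alpha(u_2 v_1) = a+1$, $\alpha(u_2 v_2) = a+b+2 \pmod{t}$, $\alpha(u_3 v_2) = a+b+1$, $\alpha(u_1 v_3) = 1$, $\alpha(u_3 v_3) = 2$. A direct check then shows that the color sets at $u_1, u_2, u_3$ are $\{1, 2, \ldots, a+2\}$, $\{a+1, \ldots, a+b+2\} \pmod{t}$, and $\{a+b+1, \ldots, t, 1, 2\}$, each a cyclic interval. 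For $M_{a,b,c}$, which has no auxiliary vertices to absorb slack, I would use $t = a+b+c+1$ (leaving one color unused at $u_0$) and set $\alpha(u_1 x_i) = i+1$, $\alpha(u_2 x_i) = i-1 \pmod{t}$, $\alpha(u_2 y_j) = a+j-1$, $\alpha(u_3 y_j) = a+j+1$, $\alpha(u_1 z_k) = a+b+k+2 \pmod{t}$, $\alpha(u_3 z_k) = a+b+k+1$. The triplet at each $x_i, y_j, z_k$ is then a cyclic interval of length $3$, and the color sets at $u_1, u_2, u_3$ become $\{1, \ldots, a+1\} \cup \{a+b+3, \ldots, t\}$, $\{t, 1, \ldots, a+b-1\}$, and $\{a+2, \ldots, t\}$, whose complements are intervals of sizes $b+1$, $c+1$, $a+1$ respectively.

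The main obstacle is that for $M_{a,b,c}$, choosing the triplet partners consistently so that all three hubs $u_1, u_2, u_3$ simultaneously receive cyclic intervals is genuinely tight. The natural ``mirror-symmetric'' triplet $\{a+b+k-1, a+b+k, a+b+k+1\}$ at every $z_k$, combined with $t = \Delta = a+b+c$, forces either a color collision at some $z_k$ or an overlap at $\{a+b, a+b+1\}$ between the $y$- and $z$-colors at $u_3$, breaking the cyclic interval property there. The key step is to switch to the asymmetric triplet $\{a+b+k, a+b+k+1, a+b+k+2\}$ at every $z_k$; this aligns the $z$-colors at $u_3$ with $\{a+b+2, \ldots, t\}$, bridging them with the $y$-block $\{a+2, \ldots, a+b+1\}$ into the single interval $\{a+2, \ldots, t\}$. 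The extra color $t = a+b+c+1$ rather than $a+b+c$ is exactly what allows the forward-shifted $z$-triplets to close up without collision at $u_1$ and $u_3$.
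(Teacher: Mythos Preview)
Your proposal is correct and follows essentially the same approach as the paper: exhibit explicit cyclic interval colorings with $a+b+c$ colors for $S_{a,b,c}$ and $a+b+c+1$ colors for $M_{a,b,c}$. Your coloring of $S_{a,b,c}$ coincides with the paper's (the paper handles the wrap-around at $u_3z_c$ and the small case $a=b=c=1$ by hand rather than writing $\bmod\ t$), and your coloring of $M_{a,b,c}$ is a minor variant of the paper's---the paper shifts the $u_0$-colors down by one (so $\beta(u_0x_i)=i-1$, etc.) and takes $\beta(u_1x_i)=i$, $\beta(u_2x_i)=i+1$ rather than your $i+1$, $i-1$, but the verification is the same short case check.
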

\begin{proof}
For the proof, we construct edge-colorings of the
graphs $S_{a,b,c}$ and $M_{a,b,c}$ with cyclic deficiency zero.
%
We first construct an edge-coloring $\alpha$ of the graph $S_{a,b,c}$. We define this coloring as follows:

\begin{description}
\item[($1$)] for $1\leq i\leq a$, let
$\alpha \left(u_{0}x_{i}\right)=i$ and $\alpha \left(u_{1}x_{i}\right)=i+1$;

\item[($2$)] for $1\leq j\leq b$, let
$\alpha \left(u_{0}y_{j}\right)=a+j$ and $\alpha \left(u_{2}y_{j}\right)=a+1+j$;

\item[($3$)] for $1\leq k\leq c$, let
$\alpha \left(u_{0}z_{k}\right)=a+b+k$;

\item[($4$)] for $1\leq k\leq c-1$, let
$\alpha \left(u_{3}z_{k}\right)=a+b+1+k$, and $\alpha \left(u_{3}z_{c}\right)=1$;

\item[($5$)] $\alpha \left(u_{1}v_{1}\right)=a+2,\alpha \left(v_{1}u_{2}\right)=a+1,\alpha \left(u_{2}v_{2}\right)=a+b+2,\alpha \left(v_{2}u_{3}\right)=a+b+1,$\\
$\alpha \left(u_{3}v_{3}\right)=2,\alpha \left(v_{3}u_{1}\right)=1$.
\end{description}

It is not difficult to see that if $a=b=c=1$, then $\alpha$ is an interval $4$-coloring of $S_{1,1,1}$;
otherwise $\alpha$ is a cyclic interval $(a+b+c)$-coloring of $S_{a,b,c}$.


Next we define an edge-coloring $\beta$ of the graph $M_{a,b,c}$ as follows: 

\begin{description}
\item[($1^{\prime}$)] for $2\leq i\leq a$, let
$\beta \left(u_{0}x_{i}\right)=i-1$, and $\beta \left(u_{0}x_{1}\right)=a+b+c+1$;

\item[($2^{\prime}$)] for $1\leq i\leq a$, let
$\beta \left(u_{1}x_{i}\right)=i$ and $\beta \left(u_{2}x_{i}\right)=i+1$;

\item[($3^{\prime}$)] for $1\leq j\leq b$, let
$\beta \left(u_{0}y_{j}\right)=a-1+j$;

\item[($4^{\prime}$)] for $1\leq j\leq b$, let
$\beta \left(u_{2}y_{j}\right)=a+1+j$ and $\beta \left(u_{3}y_{j}\right)=a+j$;

\item[($5^{\prime}$)] for $1\leq k\leq c$, let
$\beta \left(u_{0}z_{k}\right)=a+b-1+k$;

\item[($6^{\prime}$)] for $1\leq k\leq c$, let
$\beta \left(u_{3}z_{k}\right)=a+b+k$ and $\beta \left(u_{1}z_{k}\right)=a+b+1+k$.
\end{description}
It is easy to verify that $\beta$ is a cyclic interval 
$(a+b+c+1)$-coloring of $M_{a,b,c}$. 
%
%
Hence, $\mathrm{def}_{c}(S_{a,b,c})=\mathrm{def}_{c}(M_{a,b,c})=0$. 
~$\square$
\end{proof}


\begin{figure}
\centering
\begin{minipage}{.5\textwidth}
  \centering
  \begin{tikzpicture}[style=thick]
   \def\CIRCLEINNERRAD{1.5}
   \def\TAILLENGTH{3}
      \coordinate (U5lx) at (90:\TAILLENGTH cm);
      \coordinate (U5l0) at (90:\CIRCLEINNERRAD cm);
      \coordinate (U5l1) at (162:\CIRCLEINNERRAD cm);
      \coordinate (U5l2) at (-126:\CIRCLEINNERRAD cm);
      \coordinate (U5l3) at (-54:\CIRCLEINNERRAD cm);
      \coordinate (U5l4) at (18:\CIRCLEINNERRAD cm);

      
      \draw[fill=black] (U5lx) circle (2pt);
      \draw[fill=black] (U5l0) circle (2pt);
      \draw[fill=black] (U5l1) circle (2pt);
      \draw[fill=black] (U5l2) circle (2pt);
      \draw[fill=black] (U5l3) circle (2pt);
	  \draw[fill=black] (U5l4) circle (2pt);

      \draw (U5l0) --  (U5lx);
      \draw (U5l0) --  (U5l1) --  (U5l2) --  (U5l3) --  (U5l4) --  (U5l0)  --  (U5l2) --  (U5l4) --  (U5l1) --  (U5l3) --  (U5l0);

  \end{tikzpicture}\\
  \caption{The graph $H$.}
  \label{fig:test1}
\end{minipage}%
\begin{minipage}{.5\textwidth}
  \centering
  \begin{tikzpicture}[style=thick]
   \def\CIRCLEINNERRAD{2}
   \def\TAILLENGTH{3.2}
  	
      \coordinate (U9ry) at (80:3cm);
      \coordinate (U9rx) at (90:\TAILLENGTH cm);
      \coordinate (U9r0) at (90:\CIRCLEINNERRAD cm);
	  \coordinate (U9r1) at (130:\CIRCLEINNERRAD cm);
      \coordinate (U9r2) at (170:\CIRCLEINNERRAD cm);
      \coordinate (U9r3) at (-150:\CIRCLEINNERRAD cm);
      \coordinate (U9r4) at (-110:\CIRCLEINNERRAD cm);
      \coordinate (U9r5) at (-70:\CIRCLEINNERRAD cm);
      \coordinate (U9r6) at (-30:\CIRCLEINNERRAD cm);
      \coordinate (U9r7) at (10:\CIRCLEINNERRAD cm);
      \coordinate (U9r8) at (50:\CIRCLEINNERRAD cm);

	  \draw[fill=black] (U9rx) circle (2pt);
      \draw[fill=black] (U9ry) circle (2pt);
      \draw[fill=black] (U9r0) circle (2pt);
      \draw[fill=black] (U9r1) circle (2pt);
      \draw[fill=black] (U9r2) circle (2pt);
      \draw[fill=black] (U9r3) circle (2pt);
      \draw[fill=black] (U9r4) circle (2pt);
      \draw[fill=black] (U9r5) circle (2pt);
      \draw[fill=black] (U9r6) circle (2pt);
      \draw[fill=black] (U9r7) circle (2pt);
      \draw[fill=black] (U9r8) circle (2pt);
      
      \draw (U9r0) --  (U9rx);
      \draw (U9r0) --  (U9ry);
      
      \draw (U9r0) --  (U9r1) --  (U9r2) --  (U9r3) --  (U9r4) --  (U9r5) --  (U9r6) --  (U9r7) --  (U9r8) --  (U9r0)  --  (U9r2) --  (U9r4) --  (U9r6) --  (U9r8); \draw (U9r1) --  (U9r3) --  (U9r5) --  (U9r7) --  (U9r0) --  (U9r3) --  (U9r6) --  (U9r0) --  (U9r4) --  (U9r8) --  (U9r3) --  (U9r7) --  (U9r2) --  (U9r6) --  (U9r0) --  (U9r5) --  (U9r1) --  (U9r6) --  (U9r2) --  (U9r7) --  (U9r3) --  (U9r8) --  (U9r4) --  (U9r0); 
      \draw (U9r1) -- (U9r4) -- (U9r7) -- (U9r1);
      \draw (U9r2) -- (U9r5) -- (U9r8) -- (U9r2);
    
  \end{tikzpicture}\\
  \caption{The graph $F$.}
  \label{fig:test2}
\end{minipage}
\end{figure}

The main result of this section is the following:  

\begin{theorem}
\label{mytheorem} For any positive integers $m,n$ ($m\leq n$), there exists a connected graph $G$ of bounded maximum degree such that $\mathrm{def}_{c}(G)=m$ and $\mathrm{def}(G)=n$.
\end{theorem}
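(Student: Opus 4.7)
The strategy is to build $G_{m,n}$ by assembling two types of bounded-max-degree gadgets, one contributing to both deficiencies simultaneously and the other contributing only to the ordinary deficiency, linked into a single connected graph.

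First I would analyze the two gadgets $H$ (Figure~\ref{fig:test1}) and $F$ (Figure~\ref{fig:test2}). For $H$ ($K_5$ with a pendant edge), I would show $\mathrm{def}_c(H)=0$ by exhibiting the explicit cyclic $5$-coloring obtained by coloring the $K_5$-edge $v_i v_j$ with $(i+j)\bmod 5$ and the pendant at $v_0$ with the color missing at $v_0$; each vertex then has color-complement a singleton in $\{1,\dots,5\}$, hence a cyclic interval. On the other hand, $\mathrm{def}(H)\geq 1$ follows from $K_5$ being Class~$2$ together with a parity check on the $5$-degree vertex. For $F$ ($K_9$ minus an edge, with two pendants at a common vertex), I would show $\mathrm{def}_c(F)\geq 1$ by a local counting argument using the degree-$10$ vertex, the two pendants and the missing edge, which together prevent any proper edge coloring from being cyclic-interval; an explicit coloring after attaching a bounded number of pendants then pins down $\mathrm{def}_c(F)$ and $\mathrm{def}(F)$ to small positive constants.

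Given $m\leq n$, I would define $G_{m,n}$ by taking $m$ disjoint copies of $F$ and $n-m$ disjoint copies of $H$ and linking them into one connected component by bridge edges (or short paths) joining pendant-degree vertices of consecutive copies. Since the bridges are attached only at low-degree vertices, one has $\Delta(G_{m,n})\leq \max(\Delta(H),\Delta(F))\leq 10$, independent of $m$ and $n$, so the maximum degree is bounded as required. The upper bounds $\mathrm{def}(G_{m,n})\leq n$ and $\mathrm{def}_c(G_{m,n})\leq m$ are obtained by patching together the near-optimal individual gadget colorings across the bridges, using the pendant slack already present in each gadget to accommodate the linking edges. The matching lower bounds follow by an additivity argument: any (cyclic) interval coloring of $G_{m,n}$ restricts to a coloring of each gadget copy whose (cyclic) deficiency is at least that of the copy in isolation, and the bridges, attached at non-critical vertices, cannot allow deficiencies to cancel across copies.

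The principal difficulty is the lower-bound additivity in the last step: ensuring that each copy's deficiency contribution cannot be absorbed by a clever global coloring exploiting the bridges. I would handle this by arranging the bridges at pendant or near-pendant vertices only, so that the high-degree structure that drives each gadget's deficiency (the $K_5$ in $H$, and the degree-$10$ vertex and missing edge in $F$) is untouched by the linking. Each copy is then forced to incur its own deficiency independently, and the totals sum to $m$ and $n$ respectively.
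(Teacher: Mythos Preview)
Your central claim, that $\mathrm{def}_c(F)\geq 1$, is false: the graph $F$ (a copy of $K_9-e$ with two pendants at a common vertex) \emph{is} cyclically interval colorable. The paper states this explicitly, and Figure~\ref{K9-minus-edge-K5-star-cyclic-def} exhibits a cyclic interval $12$-coloring of each $F$-block. Consequently your proposed ``local counting argument using the degree-$10$ vertex, the two pendants and the missing edge'' cannot succeed, and with it your lower bound $\mathrm{def}_c(G_{m,n})\geq m$ collapses: with $m$ copies of $F$ and $n-m$ copies of $H$, both cyclically colorable, the cyclic deficiency of your $G_{m,n}$ is in fact $0$.

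Neither $H$ nor $F$ alone has positive cyclic deficiency, so no construction using just these two gadgets can work. The paper resolves this by augmenting each of the $m$ copies of $H$: a star with $11$ leaves is attached at the pendant vertex of $H$, and the resulting $17$-vertex block is exactly the graph $J$ of Figure~\ref{fig}, which is known from \cite{PetMkhitaryan} to have no cyclic interval coloring. This is also why $\Delta(G_{m,n})=12$ rather than $10$. The $m$ copies of $J$ force $\mathrm{def}_c(G_{m,n})\geq m$ by the restriction argument you sketch (the bridge vertices have degree $1$ in each $J$-copy, so their spectra are trivially cyclic intervals); the $n-m$ copies of $F$ contribute nothing to the cyclic deficiency but each contribute $1$ to the ordinary deficiency via $\mathrm{def}(K_9-e)=3$. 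Thus the gadget roles are the reverse of what you propose, and the essential ingredient --- the star attachment turning $H$ into $J$ --- is missing from your construction.
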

\begin{proof}
We shall construct a connected graph $G_{m,n}$ satisfying the conditions of
 the theorem. Let $H$ be the graph shown in Figure \ref{fig:test1} 
and $F$ be the graph shown in Figure \ref{fig:test2}. 
In \cite{GiaroKubaleMalaf1,PetrosHrant}, it was proved that 
$\mathrm{def}(K_{5})=2$ and $\mathrm{def}(K_{9}-e)=3$. 
This implies that $\mathrm{def}(H)\geq 1$ and $\mathrm{def}(F)\geq 1$. 
Let us consider the graph $G_{m,n}$ shown in Figure \ref{K9-minus-edge-K5-star};
this graph contains $m$ copies of $H$ and $n-m$ copies of $F$. 
Clearly, $G_{m,n}$ is a connected graph and $\Delta(G_{m,n})=12$ 
for any $m,n\in \mathbb{N}$. 
Since the graph $G_{m,n}$ contains $m$ copies of $H$ and $n-m$ 
copies of $F$, $\mathrm{def}(G_{m,n})\geq n$.
On the other hand, the coloring in
Figure \ref{K9-minus-edge-K5-star-def} implies
that $\mathrm{def}(G_{m,n})\leq n$.
Thus $\mathrm{def}(G_{m,n})=n$ for any $m,n\in \mathbb{N}$.

Let us now show that $\mathrm{def}_{c}(G_{m,n})=m$ for any $m,n\in \mathbb{N}$. 
We first note that the graphs $H$ and $F$ are cyclically interval colorable. Moreover, it can be seen from Figure \ref{K9-minus-edge-K5-star-cyclic-def} 
that $\mathrm{def}_{c}(G_{m,n})\leq m$. 

It remains to prove that $\mathrm{def}_{c}(G_{m,n})\geq m$.
Let $J$ be the graph shown in Figure \ref{fig}.
In \cite{PetMkhitaryan}, it was proved that $J\notin \mathfrak{N}_{c}$, 
thus $\mathrm{def}_{c}(J)\geq 1$. 
Let $\alpha$ be a proper $t$-edge-coloring of $G_{m,n}$ with a 
minimum cyclic deficiency, that is 
$\mathrm{def}_{c}(G_{m,n},\alpha)=\mathrm{def}_{c}(G_{m,n})$. 
Suppose, for a contradiction, that $\mathrm{def}_{c}(G_{m,n})< m$. 
Since $G_{m,n}$ contains $m$ copies of $J$,
this implies that there exists a copy $J^{\prime}$ of the graph $J$ in $G_{m,n}$ such that for every $v\in V(J^{\prime})$, the set $S_{J^{\prime}}\left(v,\alpha \right)$ is a cyclic interval modulo $t$. This implies that $J$ has a cyclic
interval coloring, a contradiction.
%
%
$\square$
\end{proof}

\begin{figure}
  \begin{center}
  \input{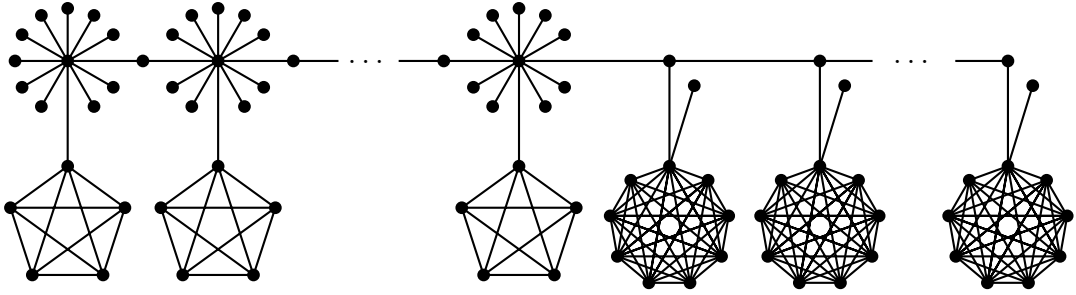}
  \end{center}
  \caption{The graph $G_{m,n}$.}
  \label{K9-minus-edge-K5-star}
\end{figure}

\begin{figure}
  \begin{center}
  \input{graph_deficiency.tikz}
  \end{center}
  \caption{$\mathrm{def}(G_{m,n})\leq n$.}
  \label{K9-minus-edge-K5-star-def}
\end{figure}

\begin{figure}
  \begin{center}
  \input{graph_cyclic_deficiency.tikz}
  \end{center}
  \caption{$\mathrm{def}_{c}(G_{m,n})\leq m$.}
  \label{K9-minus-edge-K5-star-cyclic-def}
\end{figure}

The above theorem has some immediate consequences.

\begin{corollary}
\label{corollary1} For any $n\in \mathbb{N}$, there exists a connected graph 
$G$ of bounded maximum degree such that $\mathrm{def}_{c}(G)\geq n$.
\end{corollary}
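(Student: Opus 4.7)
The plan is to derive this as an immediate consequence of Theorem \ref{mytheorem}. Given any $n\in\mathbb{N}$, I would simply apply the theorem with the choice $m=n$, obtaining a connected graph $G_{n,n}$ satisfying $\mathrm{def}_c(G_{n,n})=n$. Since the construction in the proof of Theorem \ref{mytheorem} yields graphs with $\Delta(G_{m,n})=12$ regardless of $m$ and $n$, the resulting graph $G_{n,n}$ has bounded maximum degree, so taking $G=G_{n,n}$ satisfies all the requirements of the corollary.

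There is essentially no obstacle: the corollary is a restatement of a weaker consequence of Theorem \ref{mytheorem}, obtained by dropping the control on $\mathrm{def}(G)$ and strengthening ``$=m$'' to ``$\geq n$''. The only minor point worth noting explicitly is that the maximum degree bound $12$ is independent of $n$, which is what gives the ``bounded maximum degree'' conclusion of the corollary.
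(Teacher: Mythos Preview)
Your proposal is correct and matches the paper's approach: the corollary is stated as an immediate consequence of Theorem~\ref{mytheorem}, and your choice $m=n$ (yielding $G_{n,n}$ with $\Delta=12$ and $\mathrm{def}_c=n$) is exactly the intended derivation.
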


\begin{corollary}
\label{corollary2} For any $n\in \mathbb{N}$, there exists a connected graph $G$ of bounded maximum degree such that  $\mathrm{def}(G)-\mathrm{def}_{c}(G)\geq n$.
\end{corollary}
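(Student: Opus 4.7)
The plan is to derive Corollary \ref{corollary2} as an immediate consequence of Theorem \ref{mytheorem}. The theorem asserts that for any positive integers $m \leq n'$ we can construct a connected graph $G_{m,n'}$ of bounded maximum degree (indeed, with $\Delta(G_{m,n'}) = 12$) satisfying $\mathrm{def}_c(G_{m,n'}) = m$ and $\mathrm{def}(G_{m,n'}) = n'$. Since the gap between these two quantities equals $n' - m$, we can make it as large as we wish simply by pushing $n'$ up while keeping $m$ fixed.

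Concretely, given $n \in \mathbb{N}$, I would apply Theorem \ref{mytheorem} with the parameter $m=1$ and the theorem's $n$-parameter set to $n+1$. This produces a connected graph $G := G_{1,\,n+1}$ with bounded maximum degree (at most $12$) such that $\mathrm{def}_c(G) = 1$ and $\mathrm{def}(G) = n+1$. Therefore
\[
\mathrm{def}(G) - \mathrm{def}_c(G) \;=\; (n+1) - 1 \;=\; n,
\]
which establishes the corollary.

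There is essentially no obstacle here: the construction and the verification that it achieves the prescribed cyclic deficiency and deficiency have already been carried out in the proof of Theorem \ref{mytheorem}, using the copies of $H$ and $F$ (the graphs of Figures \ref{fig:test1} and \ref{fig:test2}) together with the explicit edge colorings shown in Figures \ref{K9-minus-edge-K5-star-def} and \ref{K9-minus-edge-K5-star-cyclic-def}. The only thing the corollary really adds is the observation that $m$ in that theorem may be chosen as a fixed constant (e.g.\ $1$) while $n$ is allowed to grow; this shows that not only can the two deficiency parameters differ, but their difference can be made arbitrarily large within a family of graphs of uniformly bounded maximum degree.
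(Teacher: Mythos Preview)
Your proposal is correct and matches the paper's approach: the paper presents Corollary~\ref{corollary2} as an immediate consequence of Theorem~\ref{mytheorem} without giving an explicit proof, and your choice of parameters $m=1$, $n'=n+1$ is exactly the kind of instantiation intended.
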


\bigskip

 \section{Graphs with large cyclic deficiency}\

In this section we shall describe three methods for constructing classes of
graphs with large cyclic deficiency. 
Our methods generalize previous constructions of graphs without interval colorings
by Hertz, Giaro et al. and Erd\H os (see e.g. 
\cite{GiaroKubaleMalaf1,JensenToft,GiaroKubaleMalaf2}) and give new classes of graphs with 
large deficiencies.

\subsection{Constructions using subdivisions}\bigskip

Let $G$ be a graph with $V(G)=\{v_{1},\ldots,v_{n}\}$. Define the graphs
$S(G)$ and $\widehat{G}$ as follows:
\begin{center}
$V(S(G))=\{v_{1},\ldots,v_{n}\}\cup \{w_{ij}:v_{i}v_{j}\in E(G)\}$,
\end{center}
\begin{center}
$E(S(G))=\{v_{i}w_{ij},v_{j}w_{ij}:v_{i}v_{j}\in E(G)\}$,
\end{center}
\begin{center}
$V(\widehat{G})=V(S(G))\cup \{u\}$, $u\notin V(S(G))$,
$E(\widehat{G})=E(S(G))\cup \{uw_{ij}:v_{i}v_{j}\in E(G)\}$.
\end{center}

In other words, $S(G)$ is the graph obtained by subdividing every
edge of $G$, and $\widehat{G}$ is the graph obtained from $S(G)$ by
connecting every inserted vertex to a new vertex $u$. Clearly,
$S(G)$ and $\widehat{G}$ are bipartite graphs.

We will use the following two results.

\begin{theorem}
\label{mytheorem5} \cite{DanDevPra} If $G$ is a connected graph with at least two vertices, then 
$\mathrm{diam}(S(G))\leq 2\mathrm{diam}(G)+2$.
\end{theorem}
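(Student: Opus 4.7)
The plan is to do a straightforward case analysis on the types of the two endpoints whose distance we are measuring, exploiting the explicit description of $S(G)$: the vertex set of $S(G)$ partitions into the \emph{old} vertices $\{v_1,\dots,v_n\}$ and the \emph{subdivision} vertices $\{w_{ij} : v_iv_j \in E(G)\}$, and every subdivision vertex $w_{ij}$ has degree exactly $2$ with neighbors $v_i$ and $v_j$.

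First I would establish the key elementary fact that for any two old vertices $v_i, v_j$, the distance in $S(G)$ satisfies $d_{S(G)}(v_i, v_j) = 2\, d_G(v_i, v_j)$. The upper bound is immediate by replacing each edge of a shortest $v_i$-$v_j$ path in $G$ by the corresponding length-two path through a subdivision vertex; the lower bound is because any $v_i$-$v_j$ walk in $S(G)$ alternates between old and subdivision vertices, so its length is even, and each pair of consecutive old vertices along the walk projects to an edge of $G$, producing a $v_i$-$v_j$ walk in $G$ of half the length.

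Second, I would handle the two remaining cases. If one endpoint is an old vertex $v_i$ and the other is a subdivision vertex $w_{jk}$, then $w_{jk}$ is adjacent to $v_j$, so by the triangle inequality
\[
d_{S(G)}(v_i, w_{jk}) \le d_{S(G)}(v_i, v_j) + 1 = 2\, d_G(v_i, v_j) + 1 \le 2\,\mathrm{diam}(G) + 1.
\]
If both endpoints are subdivision vertices $w_{ij}$ and $w_{kl}$, then $w_{ij}$ is adjacent to $v_i$ and $w_{kl}$ is adjacent to $v_k$, so
\[
d_{S(G)}(w_{ij}, w_{kl}) \le 1 + d_{S(G)}(v_i, v_k) + 1 = 2\, d_G(v_i, v_k) + 2 \le 2\,\mathrm{diam}(G) + 2.
\]
Taking the maximum over all three cases yields $\mathrm{diam}(S(G)) \le 2\,\mathrm{diam}(G) + 2$.

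There is no real obstacle here; the only thing to watch is that the short argument uses the assumption that $G$ is connected with at least two vertices (so $S(G)$ is also connected and $\mathrm{diam}(G)$ is well defined), and that the hypothesis $v_iv_j \in E(G)$ is what legitimizes the step $d_{S(G)}(v_i, w_{jk}) \le d_{S(G)}(v_i, v_j) + 1$. The bound $2\,\mathrm{diam}(G) + 2$ is governed entirely by the worst case of two subdivision vertices lying on opposite ends of a diametral pair, so no further refinement is needed.
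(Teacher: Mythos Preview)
Your argument is correct. The paper does not supply its own proof of this statement; it is quoted as a known result with a citation to \cite{DanDevPra}, so there is no in-paper proof to compare against. Your case analysis (old--old, old--subdivision, subdivision--subdivision), together with the bipartite structure of $S(G)$ to get $d_{S(G)}(v_i,v_j)=2\,d_G(v_i,v_j)$, is exactly the standard way to see the bound, and every step is sound.
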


\begin{theorem}
\label{mytheorem6} \cite{PetMkhitaryan} If $G$ is a connected bipartite graph and $G\in
\mathfrak{N}_{c}$, then $
W_{c}(G)\leq 1+2 \mathrm{diam}(G)\left(\Delta(G)-1\right)$.
\end{theorem}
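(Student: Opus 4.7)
The plan is to bound $W_c(G)$ by propagating the local cyclic-interval constraint at each vertex along edge-walks, while exploiting bipartiteness to ensure that any two edges of $G$ are joined by a short edge-walk.

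First I would observe that for any cyclic interval $t$-coloring $\alpha$ of $G$ and any vertex $v$, the set $S(v, \alpha)$ has size $d_G(v) \le \Delta(G)$ and forms a cyclic interval modulo $t$, so any two colors appearing at $v$ are at cyclic distance at most $\Delta(G) - 1$. Applying this at each shared vertex of an edge-walk $f_0, f_1, \ldots, f_L$ (consecutive edges sharing a vertex) and using the triangle inequality for cyclic distance gives that the cyclic distance between $\alpha(f_0)$ and $\alpha(f_L)$ is at most $L(\Delta(G) - 1)$.

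The key combinatorial ingredient is the following claim: for any two edges $e_1 = x_1 y_1$ and $e_2 = x_2 y_2$ of $G$, some endpoints $u \in \{x_1, y_1\}$ and $w \in \{x_2, y_2\}$ satisfy $d_G(u, w) \le \mathrm{diam}(G) - 1$. If this failed, all four endpoint distances would equal $\mathrm{diam}(G)$; but fixing the bipartition of $G$, two of these four distances are even and the other two are odd, so they cannot all take the single value $\mathrm{diam}(G)$---a contradiction.

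Given such $u$, $w$ with $d = d_G(u, w) \le \mathrm{diam}(G) - 1$, concatenating a shortest $u$--$w$ path of length $d$ with $e_1$ and $e_2$ produces an edge-walk from $e_1$ to $e_2$ with at most $d + 2$ edges, and hence at most $d + 1 \le \mathrm{diam}(G)$ transitions. By the first observation, the cyclic distance between $\alpha(e_1)$ and $\alpha(e_2)$ is at most $\mathrm{diam}(G)(\Delta(G) - 1)$ for any two edges of $G$.

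To conclude, I would take a cyclic interval coloring $\alpha$ that uses the maximum number $W_c(G)$ of colors, and fix one such color $c^*$. Every color used by $\alpha$ lies within cyclic distance $\mathrm{diam}(G)(\Delta(G) - 1)$ of $c^*$, and since at most $1 + 2\,\mathrm{diam}(G)(\Delta(G) - 1)$ integers lie within that cyclic distance of $c^*$, this yields $W_c(G) \le 1 + 2\,\mathrm{diam}(G)(\Delta(G) - 1)$. The main obstacle is the bipartite parity argument in the combinatorial claim: without it, one only obtains the weaker bound with $\mathrm{diam}(G) + 1$ in place of $\mathrm{diam}(G)$, since the min endpoint distance between two edges might equal $\mathrm{diam}(G)$ in general graphs.
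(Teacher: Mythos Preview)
The paper does not supply a proof of this theorem; it is quoted from \cite{PetMkhitaryan} and invoked as an auxiliary tool in the proofs of Theorems~\ref{mytheorem7} and~\ref{mytheorem2.2.1}. There is therefore no in-paper argument to compare your proposal against.

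Your proof is correct. The two ingredients --- propagating the bound $\Delta(G)-1$ on the cyclic distance between the colors of two edges sharing a vertex along an edge-walk, and the parity observation that among the four endpoint-to-endpoint distances between two edges of a bipartite graph two are even and two are odd, so not all four can equal $\mathrm{diam}(G)$ --- combine exactly as you describe to produce an edge-walk with at most $\mathrm{diam}(G)$ transitions between any two edges. The final counting step is also sound: since every used color lies within cyclic distance $D=\mathrm{diam}(G)(\Delta(G)-1)$ of a fixed used color $c^{*}$, and the cyclic ball of radius $D$ in $\{1,\dots,t\}$ has at most $2D+1$ elements, at most $2D+1$ colors can appear, giving $W_{c}(G)\le 1+2\,\mathrm{diam}(G)(\Delta(G)-1)$. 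Your remark that bipartiteness is what saves the extra $+1$ in the diameter term is exactly the point; for general connected graphs one only gets the weaker bound with $\mathrm{diam}(G)+1$.
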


The following is the main result of this subsection.

\begin{theorem}
\label{mytheorem7} If $G$ is a connected graph with at least two vertices, then
\begin{center}
$\mathrm{def}_{c}(\widehat{G})\geq \frac{\left(\vert E(G)\vert -1\right)}
{4\left(\mathrm{diam}(G)+2\right)}-\Delta(G)+1$.
\end{center}
\end{theorem}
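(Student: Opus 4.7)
The approach is to reduce the problem to one about the bipartite subdivision graph $S(G)$, to which Theorems \ref{mytheorem5} and \ref{mytheorem6} can be applied. Let $p=\mathrm{def}_{c}(\widehat{G})$, and fix a graph $H$ obtained from $\widehat{G}$ by attaching $p$ pendant edges, together with a cyclic interval $t$-coloring $\alpha$ of $H$ for some $t$. Since $\alpha$ is proper and $d_H(u)\geq |E(G)|$, we immediately have $t\geq |E(G)|$.

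The key step is to build an auxiliary bipartite graph $H'$ from $H$. Delete $u$ from $H$ together with every pendant of $H$ attached to $u$; then, for each edge $v_iv_j \in E(G)$, attach a new leaf $q_{ij}$ to the subdivision vertex $w_{ij}$ and set $\alpha(w_{ij}q_{ij})=\alpha(uw_{ij})$. I claim that this extended coloring is a cyclic interval $t$-coloring of $H'$. Indeed, each $v_i\in V(G)$ is not adjacent to $u$, so its color set is unchanged; and at each $w_{ij}$ we have merely rerouted the edge $uw_{ij}$ to the new leaf $q_{ij}$ while preserving its color, so the set $S_{H'}(w_{ij},\alpha)$ equals $S_{H}(w_{ij},\alpha)$, which is a cyclic interval modulo $t$. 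Hence $H'$ is a connected bipartite graph in $\mathfrak{N}_{c}$ and $t \leq W_{c}(H')$.

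Now I bound $\mathrm{diam}(H')$ and $\Delta(H')$. Since $H'$ arises from $S(G)$ by attaching pendant edges, $\mathrm{diam}(H')\leq \mathrm{diam}(S(G))+2 \leq 2\,\mathrm{diam}(G)+4 = 2(\mathrm{diam}(G)+2)$ by Theorem \ref{mytheorem5}. Each $v_i$ has degree at most $\Delta(G)+p$ in $H'$, while each $w_{ij}$ has degree at most $3+p$; assuming $\Delta(G)\geq 3$, both are bounded by $\Delta(G)+p$, giving $\Delta(H')\leq \Delta(G)+p$. Applying Theorem \ref{mytheorem6} to $H'$ yields
\begin{equation*}
|E(G)| \leq t \leq W_{c}(H') \leq 1+4(\mathrm{diam}(G)+2)(\Delta(G)+p-1),
\end{equation*}
and rearranging gives the claimed inequality. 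The residual case $\Delta(G)\leq 2$ (where $G$ is a single edge, a path, or a cycle) is handled by direct inspection: in that regime the right-hand side of the theorem is non-positive, so the bound is vacuously true.

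The main obstacle is the construction of $H'$ and the verification that the rerouting surgery preserves the cyclic interval property at every subdivision vertex $w_{ij}$; once this is in place, the diameter and degree estimates and the final application of Theorems \ref{mytheorem5} and \ref{mytheorem6} are routine.
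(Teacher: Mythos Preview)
Your argument is essentially the same as the paper's: start from an optimal coloring, detach $u$ and reroute each edge $uw_{ij}$ to a fresh leaf carrying the same color, observe that the resulting graph is $S(G)$ with pendants, and feed the diameter bound (Theorem~\ref{mytheorem5}) and the $W_c$ bound (Theorem~\ref{mytheorem6}) into the inequality $|E(G)|\le t$. The paper sets it up by first fixing a minimum-deficiency coloring of $\widehat{G}$ and then attaching pendants, whereas you fix the pendant graph $H$ with its cyclic interval coloring directly; these are equivalent formulations, and your explicit disposal of the case $\Delta(G)\le 2$ is a small extra care the paper leaves implicit.
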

\begin{proof}
Let $\alpha$ be a proper $t$-edge-coloring of $\widehat{G}$ with the minimum cyclic deficiency, that is,  $\mathrm{def}_{c}(\widehat{G},\alpha)=\mathrm{def}_{c}(\widehat{G})$. Clearly, $t\geq \Delta(\widehat{G})=\vert E(G)\vert$.

Define an auxiliary graph $\widehat{G}^{\prime}$ as follows: for each vertex $v\in V(\widehat{G})$ with $\mathrm{def}_{c}(v,\alpha)>0$, we attach $\mathrm{def}_{c}(v,\alpha)$ pendant edges at vertex $v$. Clearly, $\vert V(\widehat{G}^{\prime})\vert = \vert V(\widehat{G})\vert +\mathrm{def}_{c}(\widehat{G})$. 
Next, 
define the graph $\widehat{G}^{+}$ from $\widehat{G}^{\prime}$ as follows: 
we remove the vertex $u$ from $\widehat{G}^{\prime}$ and all isolated vertices from $\widehat{G}^{\prime}-u$ (if such vertices exist) 
and add a new vertex $u_{ij}$ for each inserted vertex $w_{ij}$; then we add new edges $u_{ij}w_{ij}$ ($v_{i}v_{j}\in E(G)$). The graph $\widehat{G}^{+}$ 
is connected and bipartite, and has 
edge set $E(\widehat{G}^{\prime}-u)\cup \{u_{ij}w_{ij}:v_{i}v_{j}\in E(G)\}$. Moreover, by Theorem \ref{mytheorem5}, we obtain that 
$\mathrm{diam}(\widehat{G}^{+})\leq 2\mathrm{diam}(G)+4$.

We extend the proper $t$-edge-coloring $\alpha$ of $\widehat{G}$ to a proper $t$-edge-coloring $\beta$ of $\widehat{G}^{+}$ as follows: for each vertex $v\in V(\widehat{G}^{\prime}-u)$ with $\mathrm{def}_{c}(v,\alpha)>0$, we color the attached edges incident to $v$ using $\mathrm{def}_{c}(v,\alpha)$ distinct colors to obtain cyclic interval modulo $t$, and for each inserted vertex $w_{ij}$, we color the edge $u_{ij}w_{ij}$ with color $\alpha(uw_{ij})$. 
By the definition 
of $\beta$
and the construction of $\widehat{G}^{+}$, we obtain that 
$\beta$ is a cyclic interval $t$-coloring. By Theorem \ref{mytheorem6}, we have
\begin{center}
$\vert E(G)\vert\leq t\leq 1+2 \mathrm{diam}(\widehat{G}^{+})\left(\Delta(\widehat{G}^{+})-1\right)\leq 1+4\left(\mathrm{diam}(G)+2\right)\left(\Delta(G)+\mathrm{def}_{c}(\widehat{G})-1\right)$.
\end{center}

Hence
\begin{center}
$\mathrm{def}_{c}(\widehat{G})\geq \frac{\left(\vert E(G)\vert -1\right)}{4\left(\mathrm{diam}(G)+2\right)}-\Delta(G)+1$.  ~$\square$
\end{center}
\end{proof}

Using Theorem \ref{mytheorem7}, we can generate infinite
families of graphs with large cyclic deficiency. Let us consider
some examples.

For the complete graph $K_{n}$ we have 
$\mathrm{diam}(K_{n}) = 1$ and $\Delta(K_{n})=n-1$;
so, by Theorem \ref{mytheorem7}, 
$\mathrm{def}_{c}(\widehat{K}_{n})\geq \frac{(n^{2}-n-2)}{24}-n+2$.

Next, for the complete bipartite graph $K_{m,n}$ it holds that 
$\mathrm{diam}(K_{m,n})\leq 2$ and $\Delta(K_{m,n})=\max\{m,n\}$; thus
by Theorem \ref{mytheorem7}, 
$\mathrm{def}_{c}(\widehat{K}_{m,n})\geq \frac{(m n-1)}{8}-\max\{m,n\}+1$.

Finally, 
for the hypercube $Q_{n}$, $\vert E(Q_{n})\vert =n2^{n-1}$ and 
$\mathrm{diam}(Q_{n})=\Delta(Q_{n})=n$. Using Theorem \ref{mytheorem7} 
we deduce that $\mathrm{def}_{c}(\widehat{Q}_{n})\geq \frac{\left(n2^{n-1}-1\right)}{4(n+2)}-n+1$. 

\subsection{Constructions using trees}\bigskip

Our next construction uses
techniques first described in \cite{PetrosKhacha} and generalizes the family of so-called Hertz graphs
first described in \cite{GiaroKubaleMalaf1}. 

%
Let $T$ be a tree and $V(T)=\{v_{1},\ldots,v_{n}\}$, $n\geq 2$.
Also, let $F(T)=\{v:v\in V(T)\wedge d_{T}(v)=1\}$. For a path
$P(v_{i},v_{j})$, define $LP(v_{i},v_{j})$ and $M(T)$ as follows:
\begin{center}
$LP(v_{i},v_{j})=\vert EP(v_{i},v_{j})\vert +\vert\left\{vw:vw\in
E(T), v\in VP(v_{i},v_{j}), w\notin VP(v_{i},v_{j})\right\}\vert$,
\end{center}

\begin{center}
$M(T)={\max }_{1\leq i<j\leq n}LP(v_{i},v_{j})$.
\end{center}

Now let us define the graph $\widetilde{T}$ as follows:
\begin{center}
$V(\widetilde{T})=V(T)\cup \{u\}$, $u\notin V(T)$,
$E(\widetilde{T})=E(T)\cup \{uv:v\in F(T)\}$.
\end{center}

Clearly, $\widetilde{T}$ is a connected graph with
$\Delta(\widetilde{T})=\vert F(T)\vert$. Moreover, if $T$ is a tree
in which the distance between any two pendant vertices is even, then
$\widetilde{T}$ is a connected bipartite graph.\\

In \cite{Kam_Trees}, Kamalian proved the following result.

\begin{theorem}
\label{mytheorem1} If $T$ is a tree, then $T\in \mathfrak{N}_{c}$ and $W_{c}(T)=M(T)$.
\end{theorem}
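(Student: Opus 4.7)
The plan is to establish the two assertions $T \in \mathfrak{N}_c$ and $W_c(T) = M(T)$ separately. For $T \in \mathfrak{N}_c$, I would invoke the classical result cited in the introduction that every tree admits an interval edge coloring; since each interval coloring is also a cyclic interval coloring, we immediately get $T \in \mathfrak{N}_c$.

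For $W_c(T) \geq M(T)$, I would exhibit a cyclic interval coloring of $T$ using $M(T)$ colors. Select two pendants $p_1, p_2$ with $LP(p_1,p_2) = M(T)$; such a pair exists because extending a path endpoint toward a pendant cannot decrease $LP$. Let the spine $p_1 = w_0, w_1, \ldots, w_k = p_2$ be the path between them. Color it greedily: assign the first spine edge color $1$, and inductively at each internal $w_i$ (having incoming spine edge of color $a_i$) assign the $d_T(w_i) - 2$ branch edges the colors $a_i+1, \ldots, a_i + d_T(w_i) - 2$ and the next spine edge the color $a_i + d_T(w_i) - 1$. A direct induction shows that the spine and its immediate branches use exactly the colors $\{1, \ldots, M(T)\}$ consecutively, with every spine vertex seeing an interval of colors. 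Extend this to the remaining subtrees attached to branch endpoints recursively, at each step coloring a subtree so that every internal vertex sees an interval of colors respecting the already-assigned branch color. The maximality of $LP(p_1,p_2) = M(T)$ forces every such subtree to have small enough ``depth'' that the recursion stays within the colors $\{1, \ldots, M(T)\}$. The resulting coloring is in fact an interval coloring (hence a cyclic interval coloring) that uses $M(T)$ colors.

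For $W_c(T) \leq M(T)$, let $\alpha$ be any cyclic interval $t$-coloring of $T$; I would exhibit pendants $p_1,p_2$ with $t \leq LP(p_1,p_2)$. Fix any edge $e_0 = uv$ with $\alpha(e_0) = c_0$. From $u$, walk in $T$ (away from $v$) toward a pendant $p_1$ by applying at each non-pendant vertex $w$ reached via an edge of color $a$ the following rule: the set $S(w,\alpha)$ is a cyclic interval modulo $t$ of size $d_T(w)$ containing $a$, so let $a^{+}$ be its clockwise endpoint from $a$, and continue along the edge at $w$ colored $a^{+}$. Symmetrically walk from $v$ to a pendant $p_2$ by choosing the counter-clockwise endpoint at each step. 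Let $P$ be the resulting path from $p_1$ to $p_2$ through $e_0$. At each internal vertex $w$ of $P$, the cyclic interval $S(w,\alpha)$ meets the previously covered color set in exactly the single color $a$ (by the greedy choice of direction), so visiting $w$ enlarges the covered set by $d_T(w) - 1$ colors. Summing these increments along both walks, the two greedy directions sweep the cyclic color wheel and so cover all of $\{1, \ldots, t\}$. Since this covered set is contained in the edge colors of $P$ together with its branch edges, $t \leq LP(p_1, p_2) \leq M(T)$.

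The main obstacle is the rigorous verification that the bi-directional greedy sweep actually covers all $t$ colors. One must show carefully that the covered set grows by exactly $d_T(w) - 1$ at each internal step, and that the clockwise and counter-clockwise walks together sweep every color without leaving a cyclic gap. An additional subtlety is distinguishing the two possible forms of the cyclic interval $S(w,\alpha)$ (either $S(w,\alpha)$ itself is an interval of integers, or its complement modulo $t$ is); both cases must be handled consistently in the greedy rule.
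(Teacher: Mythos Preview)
This theorem is not proved in the paper; it is quoted from Kamalian and used as a black box in the proof of Theorem~\ref{mytheorem2}. So there is no proof in the paper to compare your proposal against.

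Your construction for $W_c(T)\geq M(T)$ is the standard one and is sound in outline. The upper bound argument, however, has a genuine gap --- precisely the obstacle you yourself flag but do not overcome: the claim that the covered color set grows by $d_T(w)-1$ at each internal step, and that the two greedy walks together sweep all $t$ colors, fails in general. Take the spider with center $c$ and three legs $c\,x_i\,y_i$ ($i=1,2,3$), with the cyclic interval $5$-coloring $\alpha(cx_1)=1$, $\alpha(cx_2)=2$, $\alpha(cx_3)=3$, $\alpha(x_1y_1)=5$, $\alpha(x_2y_2)=3$, $\alpha(x_3y_3)=4$. Starting at $e_0=cx_2$ with $u=c$, $v=x_2$: the clockwise walk from $c$ proceeds to $x_3$ (color $3$) then $y_3$ (color $4$), covering $\{1,2,3,4\}$. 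But at $x_2$ one has $S(x_2,\alpha)=\{2,3\}$, whose counter-clockwise endpoint from $2$ is $2$ itself, so your rule directs you back along the incoming edge; the only forward edge $x_2y_2$ has color $3$, already covered. The resulting path $y_2\,x_2\,c\,x_3\,y_3$ together with its branch $cx_1$ realizes only the colors $\{1,2,3,4\}$, missing $5$. Hence from an arbitrary starting edge the sweep need not cover all $t$ colors, and your greedy rule can even be ill-defined. A correct argument requires either a judicious choice of starting edge or a structurally different induction; neither is supplied here.
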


Using this theorem we prove the following:

\begin{theorem}
\label{mytheorem2} If $T$ is a tree, then $\mathrm{def}_{c}(\widetilde{T})\geq \vert F(T)\vert -M(T)-2$.
\end{theorem}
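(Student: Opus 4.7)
The plan is to follow the strategy from the proof of Theorem \ref{mytheorem7}: take an optimal proper edge-coloring of $\widetilde{T}$, modify $\widetilde{T}$ into a tree $\widetilde{T}^+$ admitting a cyclic interval coloring with the same number of colors, and then invoke Theorem \ref{mytheorem1} to upper-bound this number of colors. Since we always have at least $\Delta(\widetilde{T})=|F(T)|$ colors available, this will produce the desired lower bound on $\mathrm{def}_c(\widetilde{T})$.

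Concretely, fix a proper $t$-edge-coloring $\alpha$ of $\widetilde{T}$ with $\mathrm{def}_c(\widetilde{T},\alpha)=\mathrm{def}_c(\widetilde{T})$, so that $t\geq \Delta(\widetilde{T})=|F(T)|$. Attach $\mathrm{def}_c(v,\alpha)$ pendant edges at every vertex $v$ with positive cyclic deficiency, coloring the new edges so that $S(v,\alpha)$ becomes a cyclic interval modulo $t$; then delete $u$ together with all pendant vertices that become isolated; and finally attach at every $v\in F(T)$ one fresh pendant edge colored $\alpha(uv)$. Let $\widetilde{T}^+$ denote the resulting graph. It is a tree, since deleting $u$ leaves the tree $T$ and we only attach pendant edges afterwards. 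At each $v\in F(T)$ the color set is the cyclic interval $\{\alpha(uv),\alpha(vw)\}$, where $w$ is the $T$-neighbour of $v$; at every other $v\in V(T)$ the color set is the cyclic interval obtained in the deficiency-completion step; and the new pendant vertices are leaves, which satisfy the cyclic interval condition trivially. Hence the extended coloring is a cyclic interval $t$-coloring of $\widetilde{T}^+$, and Theorem \ref{mytheorem1} gives $t\leq W_c(\widetilde{T}^+)=M(\widetilde{T}^+)$.

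The substantive step, which I expect to be the main obstacle, is to prove $M(\widetilde{T}^+)\leq M(T)+\mathrm{def}_c(\widetilde{T})+2$. Take a pair of vertices of $\widetilde{T}^+$ realizing $M(\widetilde{T}^+)$; a standard extension argument forces both of them to be leaves of $\widetilde{T}^+$, so each is either a replacement pendant attached to some leaf of $T$ or a deficiency pendant attached to some internal vertex of $T$. Let $P$ be the corresponding path in $\widetilde{T}^+$, $P_T$ its portion in $T$, and $p(y)$ the number of deficiency pendants attached at $y\in V(T)$. Using the identity $LP(v_i,v_j)=\sum_{y\in V(P)}d(y)-|E(P)|$ together with the degree comparison $d_{\widetilde{T}^+}(y)=d_T(y)+p(y)$ for $y\notin F(T)$ and $d_{\widetilde{T}^+}(y)=d_T(y)+1$ for $y\in F(T)$, a short case analysis on the types of the two endpoints of $P$ reduces $LP_{\widetilde{T}^+}(P)$ to $LP_T(P_T)+c+\sum_{y\in V(P_T)}p(y)$, where $c\in\{0,1,2\}$ counts the endpoints of $P$ that are replacement pendants. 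Since $LP_T(P_T)\leq M(T)$ and the deficiency pendants originally attached to $u$ disappeared in the deletion step (so $\sum_{y\in V(T)}p(y)\leq \mathrm{def}_c(\widetilde{T})$), this yields the claimed bound. Combining everything, $|F(T)|\leq t\leq M(\widetilde{T}^+)\leq M(T)+\mathrm{def}_c(\widetilde{T})+2$, which rearranges to $\mathrm{def}_c(\widetilde{T})\geq |F(T)|-M(T)-2$.
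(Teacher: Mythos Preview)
Your proposal is correct and follows essentially the same approach as the paper's proof: take an optimal coloring, split off the special vertex $u$ into pendant edges at the leaves of $T$, attach deficiency pendants, and then bound the number of colors via $W_c(T^+)=M(T^+)\leq M(T)+2+\mathrm{def}_c(\widetilde{T})$ using Theorem~\ref{mytheorem1}. One small slip to fix when you carry out the case analysis: a leaf $v\in F(T)$ can itself have positive cyclic deficiency (its two colors need not be cyclically consecutive), so your degree formula for $y\in F(T)$ should read $d_{\widetilde{T}^+}(y)=d_T(y)+1+p(y)$ and the leaves of $\widetilde{T}^+$ include deficiency pendants attached at leaves of $T$; this does not affect the bound, since the extra term $c'$ counting endpoints of $P_T$ lying in $F(T)$ is still at most~$2$.
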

\begin{proof}
Let $F(T)=\{v_{1},\ldots,v_{p}\}$ and $\alpha$ be a proper $t$-edge-coloring of $\widetilde{T}$ with the minimum cyclic deficiency, that is $\mathrm{def}_{c}(\widetilde{T},\alpha)=\mathrm{def}_{c}(\widetilde{T})$. Clearly, $t\geq \vert F(T)\vert$.

Define an auxiliary graph $\widetilde{T}^{\prime}$ as follows: for each vertex $v\in V(\widetilde{T})$ with $\mathrm{def}_{c}(v,\alpha)>0$, we attach $\mathrm{def}_{c}(v,\alpha)$ pendant edges at vertex $v$. Clearly, $\vert V(\widetilde{T}^{\prime})\vert = \vert V(\widetilde{T})\vert +\mathrm{def}_{c}(\widetilde{T})$. Next, for a graph $\widetilde{T}^{\prime}$ and $F(T)=\{v_{1},\ldots,v_{p}\}$, define the graph $T^{+}$ as follows: we remove the vertex $u$ from $\widetilde{T}^{\prime}$ and all isolated vertices from $\widetilde{T}^{\prime}-u$ (if exist) and add new vertices $u_{1},\ldots,u_{p}$, then we add new edges $u_{1}v_{1},\ldots,u_{p}v_{p}$. Clearly, $T^{+}$ is a tree with edge set $E(\widetilde{T}^{\prime}-u)\cup \{u_{1}v_{1},\ldots,u_{p}v_{p}\}$. Moreover, it is easy to see that $M(T^{+})\leq M(T)+2+\mathrm{def}_{c}(\widetilde{T})$.

We now extend a proper $t$-edge-coloring $\alpha$ of $\widetilde{T}$ to a proper $t$-edge-coloring $\beta$ of $T^{+}$ as follows: for each vertex $v\in V(\widetilde{T}^{\prime}-u)$ with $\mathrm{def}_{c}(v,\alpha)>0$, we color the attached edges incident to $v$ using $\mathrm{def}_{c}(v,\alpha)$ distinct colors to obtain cyclic interval modulo $t$, and for each $1\leq i\leq p$, we color the edge $u_{i}v_{i}$ with color $\alpha(uv_{i})$. By the definition of $\beta$
and the construction of $T^{+}$, we obtain that $T^{+}$ has a cyclic interval $t$-coloring. Since $W_{c}(T)=M(T)$ (by Theorem \ref{mytheorem1}), we have
\begin{center}
$\vert F(T)\vert\leq t\leq W_{c}(T^{+})=M(T^{+})\leq M(T)+2+\mathrm{def}_{c}(\widetilde{T})$.
\end{center}

Hence
\begin{center}
$\mathrm{def}_{c}(\widetilde{T})\geq \vert F(T)\vert -M(T)-2$.
\end{center} ~$\square$
\end{proof}

We note the following corollaries.

\begin{corollary}
\label{mycorollary1} If $T$ is a tree, then $\mathrm{def}(\widetilde{T})\geq \vert F(T)\vert -M(T)-2$.
\end{corollary}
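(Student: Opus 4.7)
The plan is to obtain this corollary as an immediate consequence of Theorem~\ref{mytheorem2} combined with the elementary comparison between the two deficiency parameters that was recorded at the start of Section 3. Specifically, every interval coloring of a graph is in particular a cyclic interval coloring, so attaching pendant edges to make a graph interval colorable automatically makes it cyclically interval colorable. Hence for every graph $G$ one has the inequality $\mathrm{def}_c(G) \leq \mathrm{def}(G)$.

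Applied to $G = \widetilde{T}$, this gives $\mathrm{def}(\widetilde{T}) \geq \mathrm{def}_c(\widetilde{T})$. Substituting the lower bound from Theorem~\ref{mytheorem2} on the right-hand side immediately yields
\[
\mathrm{def}(\widetilde{T}) \;\geq\; \mathrm{def}_c(\widetilde{T}) \;\geq\; |F(T)| - M(T) - 2,
\]
which is exactly the claim.

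There is no genuine obstacle here: the substantive combinatorial work, namely reducing an edge coloring of $\widetilde{T}$ (extended by the pendant edges that realize the cyclic deficiency) to a cyclic interval coloring of an auxiliary tree $T^{+}$ and then invoking Kamalian's theorem (Theorem~\ref{mytheorem1}) to bound the number of colors used, was already absorbed into the proof of Theorem~\ref{mytheorem2}. The corollary therefore requires only a single line of argument, namely the monotonicity $\mathrm{def}_c \leq \mathrm{def}$.
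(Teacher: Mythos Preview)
Your argument is correct and matches the paper's intended derivation: the corollary is stated immediately after Theorem~\ref{mytheorem2} with no separate proof, and follows at once from that theorem together with the inequality $\mathrm{def}_c(G)\le\mathrm{def}(G)$ noted at the start of Section~3.
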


\begin{corollary}
\label{mycorollary2} If $T$ is a tree in which the distance
between any two pendant vertices is even, then the graph $\widetilde{T}$
is bipartite, and $\mathrm{def}_{c}(\widetilde{T})\geq \vert F(T)\vert -M(T)-2$.
\end{corollary}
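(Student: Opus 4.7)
The plan is to observe that Corollary \ref{mycorollary2} makes two claims: bipartiteness of $\widetilde{T}$, and the deficiency bound. The bound is merely a restatement of Theorem \ref{mytheorem2} applied to the given tree $T$, so the only genuine work is verifying bipartiteness under the ``even pendant distances'' hypothesis. Hence essentially no new analysis is needed beyond a short structural check.

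First I would argue that $\widetilde{T}$ is bipartite. Since $T$ itself is a tree (hence bipartite and acyclic), every cycle in $\widetilde{T}$ must use the new vertex $u$. Any such cycle has the form $u, v, v_1, v_2, \ldots, v_k, w, u$, where $v, w \in F(T)$ are pendant vertices of $T$ and the middle portion is the unique $v$--$w$ path in $T$. The length of this cycle is $d_T(v,w) + 2$. By the hypothesis that $d_T(v,w)$ is even for any two pendant vertices $v,w$, this length is even. Since every cycle in $\widetilde{T}$ has even length, $\widetilde{T}$ is bipartite.

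For the deficiency inequality I would simply invoke Theorem \ref{mytheorem2}, which states that $\mathrm{def}_{c}(\widetilde{T})\geq |F(T)| - M(T) - 2$ for any tree $T$, without any bipartiteness assumption. Since the tree in the hypothesis of the corollary is an arbitrary tree (satisfying the extra pendant-distance condition), the bound applies verbatim.

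The main (and only) obstacle is purely notational: making sure one correctly identifies that cycles in $\widetilde{T}$ are exactly the closed walks through $u$ built from pendant-to-pendant paths in $T$, so that the parity condition on pendant distances directly controls cycle parity. No estimate, no auxiliary graph construction, and no extension of colorings are needed here, in contrast to the proof of Theorem \ref{mytheorem2}; this corollary is really just a specialization combined with a one-line bipartiteness check.
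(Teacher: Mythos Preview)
Your proposal is correct and matches the paper's approach: the paper states (just before Theorem~\ref{mytheorem1}) that when all pendant-to-pendant distances in $T$ are even the graph $\widetilde{T}$ is bipartite, and then the deficiency bound is simply Theorem~\ref{mytheorem2} applied verbatim. Your explicit cycle argument for bipartiteness is exactly the justification the paper leaves implicit.
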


Our constructions by trees generalize the so-called
Hertz's graphs $H_{p,q}$ first described in
\cite{GiaroKubaleMalaf1}. Hertz's graphs
are known to have a high deficiency so let us specifically 
consider the cyclic deficiency of such graphs. 

In \cite{GiaroKubaleMalaf1} the Hertz's graph 
$H_{p,q}$ $(p,q\geq 2)$ was defined as
follows:
\begin{center}
$V(H_{p,q})=\left\{a,b_{1},b_{2},\ldots,b_{p},d\right\}\cup
\left\{c_{j}^{(i)}:1\leq i\leq p, 1\leq j\leq q\right\}$ and
\end{center}
\begin{center}
$E(H_{p,q})=E_{1}\cup E_{2}\cup E_{3}$,
\end{center}
where
\begin{center}
$E_{1}=\left\{ab_{i}:1\leq i\leq p\right\}$,
$E_{2}=\left\{b_{i}c_{j}^{(i)}:1\leq i\leq p, 1\leq j\leq
q\right\}$, $E_{3}=\left\{c_{j}^{(i)}d:1\leq i\leq p, 1\leq j\leq q
\right\}$.
\end{center}

The graph $H_{p,q}$ is bipartite with maximum degree $\Delta (H_{p,q})=pq$ and
$\vert V(H_{p,q})\vert=pq+p+2$. For Hertz's graphs, Giaro, Kubale and
Malafiejski proved the following theorem.

\begin{theorem}
\label{mytheorem3} \cite{GiaroKubaleMalaf1} For any positive integers $p\geq 4$, $q\geq 3$,
\begin{center}
$\mathrm{def}(H_{p,q})=pq-p-2q-2$.
\end{center}
\end{theorem}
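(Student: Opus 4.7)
The plan is to recognize $H_{p,q}$ as a special case of the construction $\widetilde{T}$ from Subsection 4.2, so that the lower bound $\mathrm{def}(H_{p,q}) \geq pq - p - 2q - 2$ follows almost immediately from Corollary \ref{mycorollary1}, leaving only an explicit interval coloring to be exhibited for the matching upper bound.

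First, let $T$ be the tree with vertex set $\{a, b_1, \ldots, b_p\} \cup \{c_j^{(i)} : 1 \leq i \leq p,\ 1 \leq j \leq q\}$ and edges $\{ab_i : 1 \leq i \leq p\} \cup \{b_i c_j^{(i)} : 1 \leq i \leq p,\ 1 \leq j \leq q\}$. Under the hypotheses $p \geq 4$ and $q \geq 3$, the vertices $a$ and every $b_i$ have degree at least $2$ in $T$, so the pendant set is exactly $F(T) = \{c_j^{(i)}\}$, giving $|F(T)| = pq$. Joining a new vertex to all of $F(T)$ and identifying it with $d$ produces precisely $H_{p,q} = \widetilde{T}$. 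Note also that the distance between any two pendants of $T$ is even, so Corollary \ref{mycorollary1} (rather than merely Theorem \ref{mytheorem2}) applies.

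Next I would compute $M(T)$ by casework on the type of path. The extremal candidate connects two pendants $c_j^{(i)}$ and $c_{j'}^{(i')}$ with $i \neq i'$: such a path has four edges and collects $q-1$ additional incident edges at each of $b_i$ and $b_{i'}$ and $p-2$ additional incident edges at $a$, so
\[
LP(c_j^{(i)}, c_{j'}^{(i')}) = 4 + 2(q-1) + (p-2) = p + 2q.
\]
Paths of the other possible types (two pendants in the same block $b_i$, pendant-to-internal, and internal-to-internal) each yield $LP$ bounded by $\max\{q+1,\, p+q,\, p+2q\} = p+2q$ under $p \geq 4$ and $q \geq 3$. Hence $M(T) = p + 2q$, and Corollary \ref{mycorollary1} gives
\[
\mathrm{def}(H_{p,q}) = \mathrm{def}(\widetilde{T}) \geq |F(T)| - M(T) - 2 = pq - p - 2q - 2.
\]

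For the matching upper bound I would construct an explicit interval edge coloring of $H_{p,q}$ together with exactly $pq - p - 2q - 2$ attached pendant edges, distributed between $a$ and the $b_i$'s (taking no pendants at $d$ or at the $c_j^{(i)}$'s). The skeleton is: force the edges at $d$ to carry all colors $1,2,\ldots,pq$ via an assignment of $\alpha(c_j^{(i)}d)$ arranged so that for each $i$ the $q$ colors $\{\alpha(c_j^{(i)}d):j\}$ cluster into a short window; then pick $\alpha(b_i c_j^{(i)}) \in \{\alpha(c_j^{(i)}d) \pm 1\}$ with a sign pattern chosen so the colors at each $b_i$ become an interval of length $q+1+s_{b_i}$ around $\alpha(ab_i)$; and finally choose the $\alpha(ab_i)$ values and $s_a$ pendants at $a$ so that the colors at $a$ form an interval. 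The main obstacle is bookkeeping the trade-off between $s_a$ and the $s_{b_i}$: a naive choice such as $\alpha(ab_i) = (i-1)q + 1$ spreads the colors at $a$ by steps of $q$ and would demand $(p-1)(q-1)$ pendants at $a$, far above budget, while concentrating $\alpha(ab_i)$ into $\{1,\ldots,p\}$ forces large $s_{b_i}$. The correct balance is to distribute a small, uniform number of pendants to each $b_i$ to shift its interval toward a common short window around $\{1,\ldots,p\}$, so that the pendants at $a$ only fill in small residual gaps; verifying that the bookkeeping sums to exactly $pq - p - 2q - 2$ and that the resulting $\alpha$ is proper and intervally is then a finite calculation matching the lower bound.
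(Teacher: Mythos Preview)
The paper does not prove this theorem; it is quoted from \cite{GiaroKubaleMalaf1} as a known result and then used (via the inequality $\mathrm{def}_c \leq \mathrm{def}$) as the upper-bound input in the proof of Theorem~\ref{mytheorem4}. So there is no ``paper's own proof'' to compare against.

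Your lower-bound argument is correct, and in fact it is precisely the computation the paper carries out in the proof of Theorem~\ref{mytheorem4}: recognizing $H_{p,q}\cong\widetilde{T}$ with $|F(T)|=pq$ and $M(T)=p+2q$ gives $\mathrm{def}_c(H_{p,q})\geq pq-p-2q-2$, and hence the same bound for $\mathrm{def}$ via Corollary~\ref{mycorollary1}. (A minor point: the even-distance remark is irrelevant for invoking Corollary~\ref{mycorollary1}; that hypothesis only enters in Corollary~\ref{mycorollary2}, where bipartiteness of $\widetilde{T}$ is part of the conclusion.)

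Your upper bound, however, is not a proof but a statement of intentions. You correctly identify the tension---spreading the values $\alpha(ab_i)$ costs pendants at $a$, concentrating them costs pendants at the $b_i$---but you never specify the actual color assignments, the sign pattern for the $\pm 1$ shifts, or the distribution of the pendants, and you close with ``is then a finite calculation''. An upper bound on deficiency requires an explicit coloring (or at least a verifiable scheme with the bookkeeping carried out), and nothing you wrote establishes that the trade-off can be balanced to cost \emph{exactly} $pq-p-2q-2$ rather than something larger. Since this is the substantive half of the theorem, the proposal has a genuine gap here; for a self-contained argument you would need either to exhibit the coloring explicitly or to invoke the construction in \cite{GiaroKubaleMalaf1}, as the present paper does.
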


Note that this result was recently 
generalized by Borowiecka-Olszewska et al. \cite{B-OD-B}.
Using Theorems \ref{mytheorem2} and \ref{mytheorem3} we show that the following result holds.

\begin{theorem}
\label{mytheorem4} For any positive integers $p\geq 4$, $q\geq 3$, we have
\begin{center}
$\mathrm{def}_{c}(H_{p,q})=\mathrm{def}(H_{p,q})=pq-p-2q-2$.
\end{center}
\end{theorem}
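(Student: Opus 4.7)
The plan is to identify $H_{p,q}$ with a graph $\widetilde{T}$ built from a small tree $T$, so that Theorem \ref{mytheorem2} delivers the required lower bound, and then combine this with the inequality $\mathrm{def}_{c}(G)\leq \mathrm{def}(G)$ and Theorem \ref{mytheorem3} to get equality.

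First, I would take $T$ to be the tree with vertex set $\{a,b_{1},\ldots,b_{p}\}\cup\{c_{j}^{(i)}:1\leq i\leq p,\,1\leq j\leq q\}$ and edge set $\{ab_{i}\}\cup\{b_{i}c_{j}^{(i)}\}$; that is, $a$ is a central vertex of degree $p$, each $b_{i}$ has degree $q+1$, and the $c_{j}^{(i)}$ are $pq$ leaves. Since the leaves of $T$ are precisely $F(T)=\{c_{j}^{(i)}\}$, one checks directly that adding the extra vertex $u=d$ joined to all of $F(T)$ produces exactly $H_{p,q}$, so $H_{p,q}=\widetilde{T}$ and $|F(T)|=pq$.

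Next I would compute $M(T)$. The useful observation (essentially a degree sum along the path) is that for a path with $k$ edges and vertex sequence $w_{0},\ldots,w_{k}$ one has $LP(w_{0},w_{k})=\sum_{l=0}^{k}d_{T}(w_{l})-k$. Using this, I only need to check the handful of types of pairs of vertices: pairs inside a single star $\{b_{i}\}\cup\{c_{j}^{(i)}\}$ yield small values $(q+1)$, while any path passing through $a$ and joining two distinct branches (e.g.\ $b_{i}$ to $b_{i'}$, or $c_{j}^{(i)}$ to $c_{j'}^{(i')}$ with $i\neq i'$) gives exactly $p+2q$. Hence $M(T)=p+2q$ (this requires $p\geq 2$, which is satisfied).

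Applying Theorem \ref{mytheorem2} then yields
\[
\mathrm{def}_{c}(H_{p,q})=\mathrm{def}_{c}(\widetilde{T})\geq |F(T)|-M(T)-2=pq-(p+2q)-2=pq-p-2q-2.
\]
The matching upper bound is immediate: $\mathrm{def}_{c}(H_{p,q})\leq \mathrm{def}(H_{p,q})=pq-p-2q-2$ by Theorem \ref{mytheorem3}, which gives the claimed equality.

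There is no real obstacle; the only step that requires a moment's care is the case analysis for $M(T)$, where one has to be sure that no path (in particular, longer paths spanning four vertices of positive degree) can beat $p+2q$. Because $T$ has diameter $4$ and every vertex off the $a$-level is a leaf or adjacent to a leaf, the enumeration is short and the maximum is attained precisely on the paths through $a$ connecting two distinct $b$-branches.
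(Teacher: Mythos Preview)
Your proposal is correct and follows essentially the same route as the paper: identify $H_{p,q}$ with $\widetilde{T}$ for $T=H_{p,q}-d$, note $|F(T)|=pq$ and $M(T)=p+2q$, apply Theorem~\ref{mytheorem2} for the lower bound, and use $\mathrm{def}_{c}\le\mathrm{def}$ together with Theorem~\ref{mytheorem3} for the upper bound. The paper simply asserts $M(T)=p+2q$ without justification, whereas you supply the degree-sum identity $LP(w_{0},w_{k})=\sum_{l}d_{T}(w_{l})-k$ and a short case check; this extra detail is helpful and correct.
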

\begin{proof} 
%
Let us consider the tree $T=H_{p,q}-d$. Since $M(T)=p+2q$, $\vert F(T)\vert=pq$ and taking into account that the graph $H_{p,q}$ is isomorphic to $\widetilde{T}$, by Theorem \ref{mytheorem2}, we obtain that $\mathrm{def}_{c}(H_{p,q})\geq pq-p-2q-2$. On the other hand, by Theorem \ref{mytheorem3}, we have $\mathrm{def}_{c}(H_{p,q})\leq \mathrm{def}(H_{p,q})=pq-p-2q-2$. Hence, $\mathrm{def}_{c}(H_{p,q})=\mathrm{def}(H_{p,q})=pq-p-2q-2$ for any $p\geq 4$, $q\geq 3$.
 ~$\square$
\end{proof}

\begin{figure}[h]
\begin{center}
\includegraphics[width=20pc]{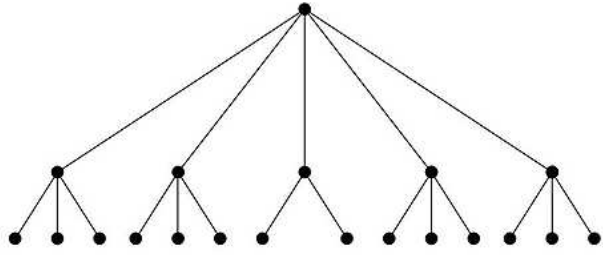}\\
\caption{The tree $T$.}\label{fig3}
\end{center}
\end{figure}

Finally, let us remark
that the above technique can be used for constructing the smallest, in terms
of maximum degree, currently known example of a bipartite graph with no cyclic
interval coloring (cf. \cite{AsratianCasselgrenPetrosyan}).
To this end, consider the tree $T$ shown in Figure \ref{fig3}. 
Since $M(T)=11$ and $\vert F(T)\vert=14$, the bipartite graph $\widetilde{T}$
with $\vert V(\widetilde{T})\vert=21$ and
$\Delta(\widetilde{T})=14$ has no cyclic interval coloring 
(See Figure \ref{fig4}). 

\begin{figure}[h]
\begin{center}
\includegraphics[width=25pc]{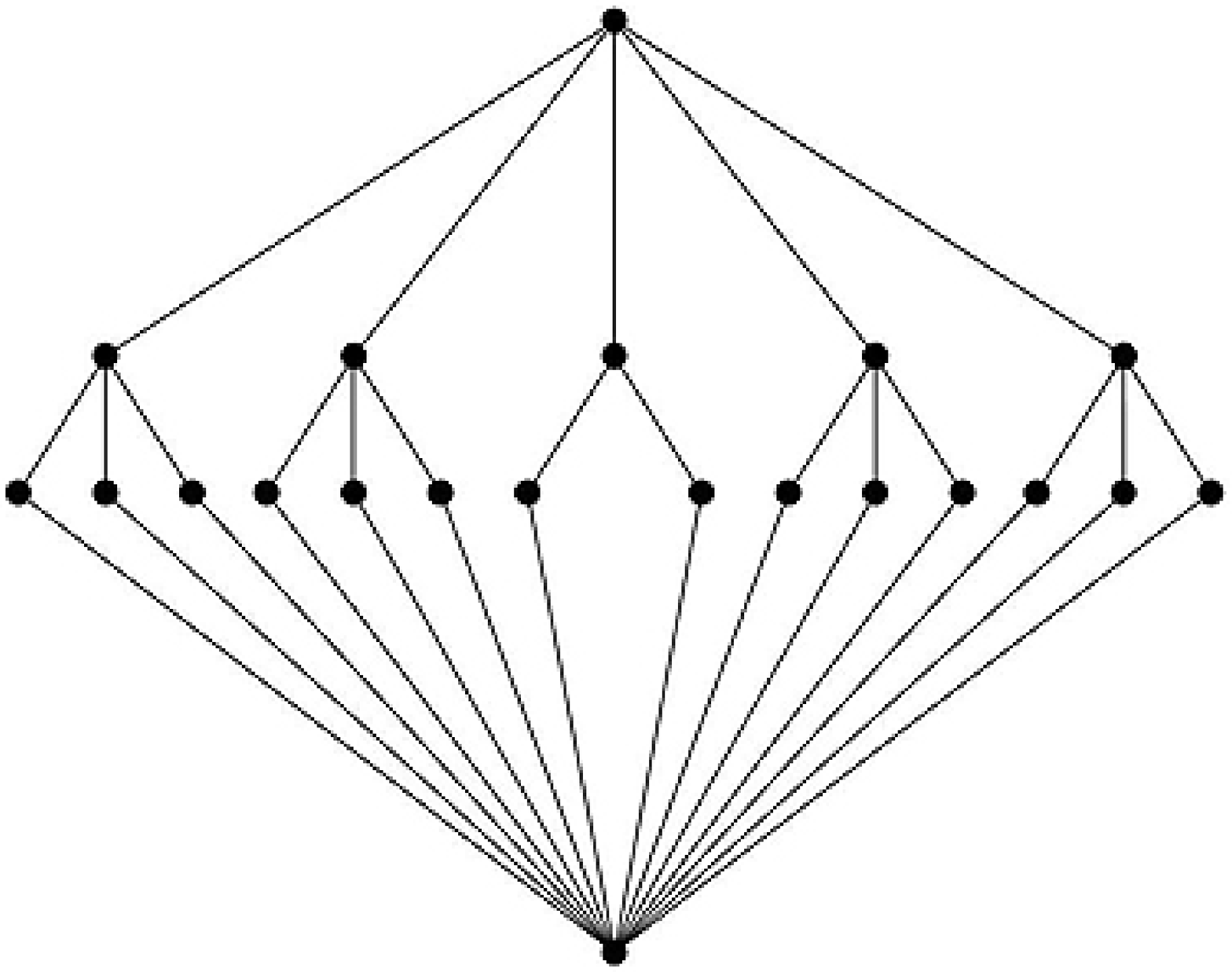}\\
\caption{The graph $\widetilde{T}$.}\label{fig4}
\end{center}
\end{figure}


\subsection{Constructions using finite projective planes}\bigskip

In the last part of this section we use finite projective planes (see Definition \ref{def:project}) for constructing
bipartite graphs with large cyclic deficiency. 
This family of graphs was first described in \cite{PetrosKhacha}.

Let $\pi(n)$ be a finite projective
plane of order $n\geq 2$, $\{1,2,\ldots,n^{2}+n+1\}$ be the set of points and $L$  the set
of lines of $\pi(n)$.  For a sequence of $n^2+n+1$ integers 
$r_{1}, r_2,\ldots, r_{n^{2}+n+1}\in \mathbb{N}$, define the graph
$Erd(r_{1},\ldots,r_{n^{2}+n+1})$  as follows:

\begin{center}
$V(Erd(r_{1},\ldots,r_{n^{2}+n+1}))=\{u\}\cup\{1,\ldots,n^{2}+n+1\}$\\
$\cup \left\{v^{(l_{i})}_{1},\ldots,v^{(l_{i})}_{r_{i}}:l_{i}\in L,1\leq i\leq n^{2}+n+1\right\}$,\\
\end{center}
\begin{center}
$E(Erd(r_{1},\ldots,r_{n^{2}+n+1}))=\left\{uv^{(l_{i})}_{1},\ldots,uv^{(l_{i})}_{r_{i}}:l_{i}\in
L, 1\leq i\leq
n^{2}+n+1\right\}\cup$\\
$\bigcup_{i=1}^{n^{2}+n+1}\left\{v^{(l_{i})}_{1}k,\ldots,v^{(l_{i})}_{r_{i}}k:l_{i}\in
L,k\in l_{i},1\leq k\leq n^{2}+n+1\right\}$.
\end{center}

Clearly, $Erd(r_{1},r_2,\ldots,r_{n^{2}+n+1})$ is a connected bipartite
graph where the number of vertices  is $n^2+n+2+\underset{i=1}{\overset{n^{2}+n+1}{\sum
}}r_{i}$ \  \
and the maximum degree is $\underset{i=1}{\overset{n^{2}+n+1}{\sum}}r_{i}$.

%
 Note that the above  graph with parameters $n=3$ and $r_1=r_2=\dots=r_{13}=1$ (see Figure \ref{fig5})
 was described in 1991 by Erd\H{o}s \cite{JensenToft}.

\begin{figure}[h]
\begin{center}
\includegraphics[width=30pc]{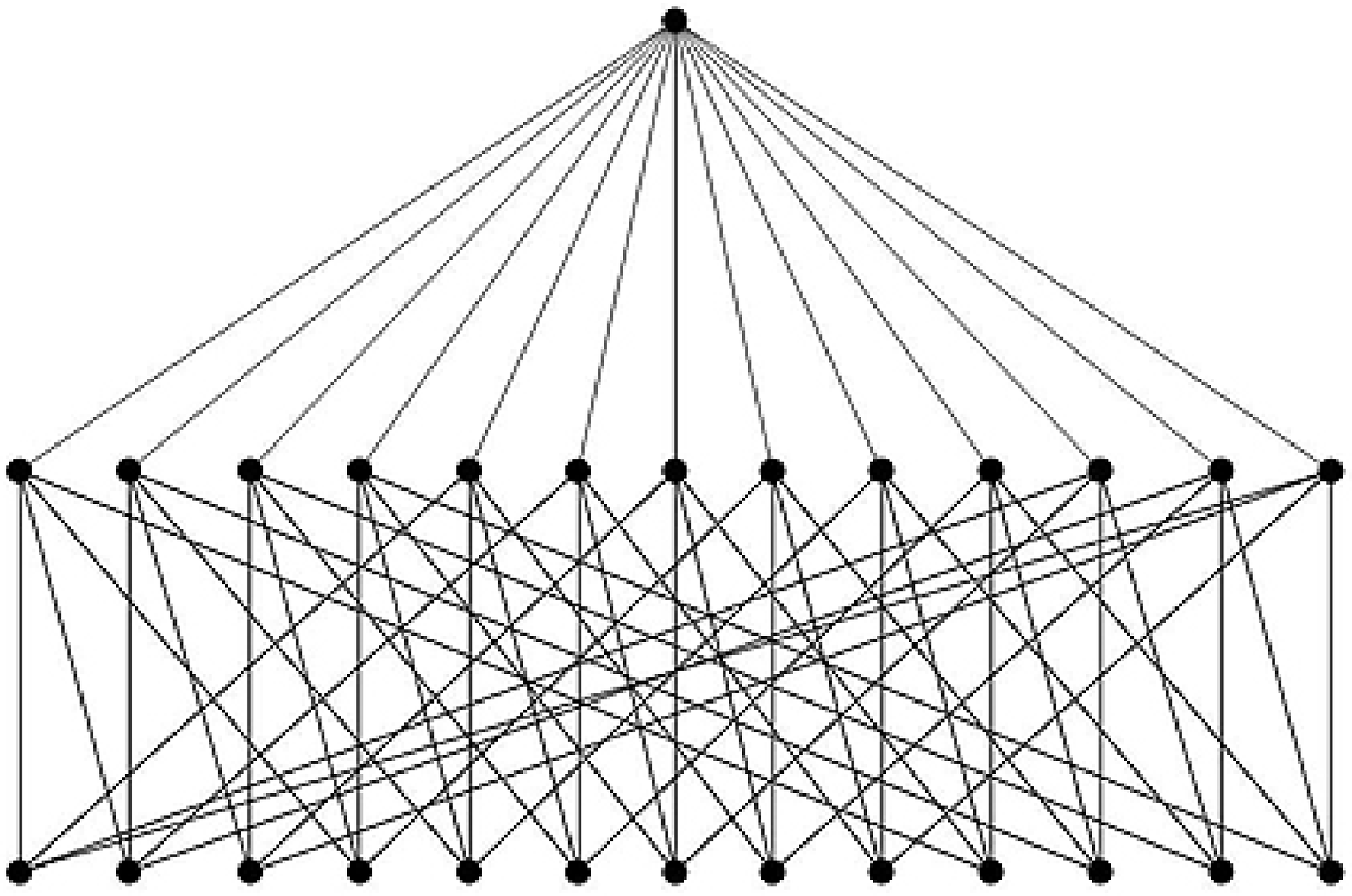}\\
\caption{Erd\H{o}s' graph.}\label{fig5}
\end{center}
\end{figure}

\begin{theorem}
\label{mytheorem2.2.1} 
For any sequence $r_{1},r_2,\ldots,r_{n^{2}+n+1}\in \mathbb{N}$ with $r_{1}\geq r_2\geq \ldots\geq r_{n^{2}+n+1}$, 
we have 
\begin{center}
$\mathrm{def}_{c}(Erd(r_{1},r_2,\ldots,r_{n^{2}+n+1}))\geq \dfrac{1}{10}\left(\underset{i=n+2}{\overset{n^{2}+n+1}{\sum
}}r_{i}-9\underset{i=1}{\overset{n+1}{\sum
}}r_{i}+9\right)$.
\end{center}
\end{theorem}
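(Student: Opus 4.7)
The plan is to follow the same strategy used in Theorem~\ref{mytheorem7}: apply the diameter-and-degree upper bound of Theorem~\ref{mytheorem6} to a bipartite, cyclically interval colorable auxiliary graph obtained from $G := Erd(r_{1},\ldots,r_{n^{2}+n+1})$ by surgery at the high-degree vertex $u$. Fix a proper $t$-edge-coloring $\alpha$ of $G$ attaining the minimum cyclic deficiency, so $\mathrm{def}_{c}(G,\alpha) = \mathrm{def}_{c}(G)$. Because $u$ has degree $\Delta(G) = \sum_{i=1}^{n^{2}+n+1} r_{i}$, we have $t \geq \sum_{i=1}^{n^{2}+n+1} r_{i}$.

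I would construct the auxiliary graph $G^{+}$ in three steps: (i) attach $\mathrm{def}_{c}(v,\alpha)$ new pendant edges at every vertex $v \neq u$ with positive cyclic deficiency; (ii) delete $u$ (and any resulting isolated vertices); (iii) attach one additional pendant edge at each $v_{j}^{(l_{i})}$ to replace the deleted edge $uv_{j}^{(l_{i})}$. Since $G$ is bipartite with one part $\{u\}\cup\{\text{points}\}$ and the other $\{v_{j}^{(l_{i})}\}$, and all modifications respect this partition, $G^{+}$ is bipartite; it is connected because $G-u$ already is by P1 and P2. Next I would extend $\alpha$ to a proper $t$-edge-coloring $\beta$ of $G^{+}$: preserve colors on retained edges; color the replacement pendant at $v_{j}^{(l_{i})}$ with $\alpha(uv_{j}^{(l_{i})})$; and color the deficiency pendants at each $v$ so that $S(v,\beta)$ is a cyclic interval modulo $t$. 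By construction, $\beta$ is a cyclic interval $t$-edge-coloring of $G^{+}$.

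It remains to bound $\mathrm{diam}(G^{+})$ and $\Delta(G^{+})$. For the diameter: axioms P1--P3 imply $\mathrm{diam}(G-u) \leq 3$ (any two points share a line, giving distance $2$ via a $v$-vertex; any two $v$-vertices share a point, giving distance $2$; a $v$-vertex and a non-incident point are joined through a point on the $v$'s line and a $v'$-vertex on the common line of these two points, giving distance $3$). Since attaching pendants inflates the diameter by at most $2$, we obtain $\mathrm{diam}(G^{+}) \leq 5$. For the maximum degree: by P3 each point $k$ lies on exactly $n+1$ lines, so its degree in $G-u$ equals $\sum_{i:\, k \in l_{i}} r_{i} \leq \sum_{i=1}^{n+1} r_{i}$ (here the hypothesis $r_{1} \geq \cdots \geq r_{n^{2}+n+1}$ is essential); deficiency pendants add at most $\mathrm{def}_{c}(G)$, and each $v_{j}^{(l_{i})}$ has degree $n+2$ plus deficiency, which is likewise dominated by $\sum_{i=1}^{n+1} r_{i} + \mathrm{def}_{c}(G)$. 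Therefore $\Delta(G^{+}) \leq \sum_{i=1}^{n+1} r_{i} + \mathrm{def}_{c}(G)$, and Theorem~\ref{mytheorem6} gives
\begin{equation*}
\sum_{i=1}^{n^{2}+n+1} r_{i} \;\leq\; t \;\leq\; 1 + 2 \cdot 5 \cdot \Bigl(\sum_{i=1}^{n+1} r_{i} + \mathrm{def}_{c}(G) - 1\Bigr).
\end{equation*}
Rewriting $\sum_{i=1}^{n^{2}+n+1} r_{i} = \sum_{i=1}^{n+1} r_{i} + \sum_{i=n+2}^{n^{2}+n+1} r_{i}$ and rearranging yields the claimed lower bound. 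The main obstacle is the maximum-degree estimate: it is precisely the projective-plane constraint (every point is incident to exactly $n+1$ lines) that lets us replace the potentially enormous degree $\sum_{i=1}^{n^{2}+n+1} r_{i}$ of $u$ with the much smaller partial sum $\sum_{i=1}^{n+1} r_{i}$ in $G^{+}$, while keeping the diameter uniformly bounded.
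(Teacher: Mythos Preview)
Your proposal is correct and follows essentially the same route as the paper's own proof: take an optimal coloring, attach deficiency pendants, excise the high-degree vertex $u$ and replace each lost edge $uv_j^{(l_i)}$ by a pendant carrying the same color, then apply Theorem~\ref{mytheorem6} to the resulting bipartite graph of diameter at most~$5$. In fact you supply more justification than the paper does for the two key estimates---the diameter bound via the incidence axioms P1--P4 and the degree bound $\Delta(G^{+})\le \sum_{i=1}^{n+1} r_i + \mathrm{def}_c(G)$ via P3 and the ordering hypothesis---both of which the paper asserts without elaboration.
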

\begin{proof}
Let $\alpha$ be a proper $t$-edge-coloring of $Erd(r_{1},\ldots,r_{n^{2}+n+1})$ with the minimum cyclic deficiency, that is,
$$\mathrm{def}_{c}(Erd(r_{1},r_2,\ldots,r_{n^{2}+n+1}),\alpha)=\mathrm{def}_{c}(Erd(r_{1},r_2,\ldots,r_{n^{2}+n+1})).$$ Clearly, 
$t\geq \underset{i=1}{\overset{n^{2}+n+1}{\sum
}}r_{i}$, because the maximum degree of $Erd(r_{1},\ldots,r_{n^{2}+n+1})$ is $\underset{i=1}{\overset{n^{2}+n+1}{\sum
}}r_{i}$.

Define an auxiliary graph $\widehat{Erd}(\overline{r})$ as follows:
for each vertex $v\in V(Erd(r_{1},\ldots,r_{n^{2}+n+1}))$ with 
$\mathrm{def}_{c}(v,\alpha)>0$, we attach $\mathrm{def}_{c}(v,\alpha)$ pendant edges at vertex $v$. 
Clearly, $\vert V(\widehat{Erd}(\overline{r}))\vert = \vert V(Erd(r_{1},\ldots,r_{n^{2}+n+1}))\vert +\mathrm{def}_{c}(Erd(r_{1},\ldots,r_{n^{2}+n+1}))$. 
Next, 
from $\widehat{Erd}(\overline{r})$ we define the graph 
$\widehat{Erd}^{+}(\overline{r})$ as follows: 
we remove the vertex $u$ from $\widehat{Erd}(\overline{r})$ and all 
isolated vertices from $\widehat{Erd}(\overline{r})-u$ 
(if such vertices exist)
and add new vertices $w^{(l_{i})}_{1},\ldots,w^{(l_{i})}_{r_{i}}$ for $l_{i}\in
L, 1\leq i\leq n^{2}+n+1$; then we add new edges $$v^{(l_{i})}_{1}w^{(l_{i})}_{1},\ldots,v^{(l_{i})}_{r_{i}}w^{(l_{i})}_{r_{i}}, \quad l_{i}\in
L, 1\leq i\leq n^{2}+n+1.$$ 
The graph $\widehat{Erd}^{+}(\overline{r})$ is
connected bipartite graph 
and has diameter at most $5$.

We now extend the proper $t$-edge-coloring 
$\alpha$ of $Erd(r_{1},\ldots,r_{n^{2}+n+1})$ 
to a proper $t$-edge-coloring $\beta$ of 
$\widehat{Erd}^{+}(\overline{r})$ as follows: for each vertex 
$v\in V(\widehat{Erd}(\overline{r})-u)$ with 
$\mathrm{def}_{c}(v,\alpha)>0$, we color the attached 
edges incident to $v$ using $\mathrm{def}_{c}(v,\alpha)$ distinct colors 
to obtain a cyclic interval modulo $t$ at $v$, and we color 
the edges 
$v^{(l_{i})}_{1}w^{(l_{i})}_{1},\ldots,v^{(l_{i})}_{r_{i}}w^{(l_{i})}_{r_{i}}$ 
with colors $\alpha\left(uv^{(l_{i})}_{1}\right),\ldots,
\alpha\left(uv^{(l_{i})}_{r_{i}}\right)$ ($l_{i}\in
L, 1\leq i\leq n^{2}+n+1$), respectively. By the definition of $\beta$
and the construction of $\widehat{Erd}^{+}(\overline{r})$, we obtain 
a cyclic interval $t$-coloring of
$\widehat{Erd}^{+}(\overline{r})$. 
Now, by Theorem \ref{mytheorem6}, we have

\begin{align*}
\underset{i=1}{\overset{n^{2}+n+1}
{\sum}}r_{i}  \leq t &\leq 1+2 \, \mathrm{diam}(\widehat{Erd}^{+}(\overline{r}))
\left(\Delta(\widehat{Erd}^{+}(\overline{r}))-1\right) \\
& \leq 1+10\left(\Delta(Erd(r_{1},\ldots,r_{n^{2}+n+1}))+
\mathrm{def}_{c}(Erd(r_{1},\ldots,r_{n^{2}+n+1}))-1\right) \\
& \leq 10\underset{i=1}{\overset{n+1}{\sum
}}r_{i}+10 \, \mathrm{def}_{c}(Erd(r_{1},\ldots,r_{n^{2}+n+1}))-9.
\end{align*}

Hence
\begin{center}
$\mathrm{def}_{c}(Erd(r_{1},\ldots,r_{n^{2}+n+1})) \geq \dfrac{1}{10}
\left(\underset{i=n+2}{\overset{n^{2}+n+1}
{\sum}}r_{i}-9\underset{i=1}{\overset{n+1}{\sum
}}r_{i}+9\right)$.  ~$\square$
\end{center} 
\end{proof}

Using Theorem \ref{mytheorem2.2.1} we can generate infinite families of graphs
with large cyclic deficiency. For example, if 
$r_1 = r_2 = \dots = r_{n^2+n+1} = k$, where  $k$ is some constant, then 
$\mathrm{def}_c (Erd(r_{1},\ldots,r_{n^{2}+n+1})) \geq 
\frac{1}{10} (n^2 k - 9 k (n+1) + 9)$.

\bigskip

\section{Upper bounds on $\text{def}_{c}(G)$}\

In this section we present some upper bounds on the cyclic deficiency
of graphs. 
 Theorem \ref{th:Vizing} implies that if every vertex of a 
graph $G$ has degree 1 or $\Delta(G)$, 
then any proper edge coloring of $G$ with $\chi'(G)$
	colors is a cyclic interval coloring of $G$. 
	This implies that the graph obtained from $G$ by
	attaching $\Delta(G)-d_G(v)$ pendant edges
	at each vertex $v$ with $1<d_G(v)<\Delta(G)$ is cyclically interval colorable.
Therefore, for any graph $G$ \bigskip

\centerline{$\mathrm{def}_{c}(G)\leq 
\sum\limits_{\begin{subarray}{c} v\in V(G),\\2\leq d_{G}(v)\leq \Delta(G)-1 \end{subarray}}
(\Delta(G)-d_{G}(v)).$} \medskip

%
%
%


In \cite{AsratianCasselgrenPetrosyan} we proved that all bipartite
graphs with maximum degree $4$ admit cyclic interval colorings.
For bipartite graphs with maximum degree $\Delta(G)\geq 5$
the above upper bound on the cyclic deficiency can be slightly improved:

\begin{proposition}
\label{prop:upperbound}
	If $G$ is a bipartite graph with maximum degree $\Delta(G) \geq 5$,
	then 
	
\begin{center}
$\mathrm{def}_{c}(G)\leq 
\left\{
\begin{tabular}{ll}
$\sum\limits_{\begin{subarray}{c} v\in V(G),\\3\leq d_{G}(v)\leq \Delta(G)-3 \end{subarray}}
(\Delta(G)-2-d_{G}(v))$, & if $\Delta(G)$ is even,\\
$\sum\limits_{\begin{subarray}{c} v\in V(G),\\3\leq d_{G}(v)\leq \Delta(G)-2 \end{subarray}}
(\Delta(G)-1-d_{G}(v))$, & if $\Delta(G)$ is odd.\\
\end{tabular}%
\right.$
\end{center}

\end{proposition}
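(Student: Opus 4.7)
Let $G'$ denote the bipartite graph obtained from $G$ by attaching the pendants prescribed by the right-hand side. Every vertex of $G'$ has degree in $\{1,2,\Delta-2,\Delta-1,\Delta\}$ (for even $\Delta$) or $\{1,2,\Delta-1,\Delta\}$ (for odd $\Delta$), where $\Delta := \Delta(G) = \Delta(G')$. Since the number of added pendants equals the right-hand side, it suffices to exhibit a cyclic interval $\Delta$-coloring of $G'$. Because $G'$ is bipartite, K\"onig's theorem gives a proper $\Delta$-edge coloring of $G'$; in any such coloring the palette at a vertex of degree $1$, $\Delta-1$ or $\Delta$ is automatically a cyclic interval modulo $\Delta$ (singleton, co-singleton, or full set), so the real task is to produce a $\Delta$-edge coloring in which, additionally, every degree-$2$ vertex receives two consecutive colors modulo $\Delta$ and, in the even case, every degree-$(\Delta-2)$ vertex misses two consecutive colors modulo $\Delta$.

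For even $\Delta$, I would group the colors into the $\Delta/2$ consecutive pairs $P_i = \{2i-1, 2i\}$ and extend $G'$ to a $\Delta$-regular bipartite multigraph $G''$ by adjoining virtual edges in \emph{bundles}: one pair of parallel virtual edges at every degree-$(\Delta-2)$ vertex, $(\Delta-2)/2$ such pairs at every degree-$2$ vertex, together with further virtual edges and auxiliary vertices (balancing the two parts of the bipartition) as necessary to reach $\Delta$-regularity. Petersen's Theorem (Theorem \ref{th:Petersen}) decomposes $G''$ into $\Delta/2$ edge-disjoint $2$-factors; by first contracting each bundle to a single super-edge, reapplying Petersen's Theorem to the resulting regular bipartite multigraph, and then re-expanding, one obtains a decomposition in which every bundle lies entirely inside one $2$-factor. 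Coloring each $2$-factor $F_i$ alternately with the two colors of $P_i$---a proper $2$-edge coloring of its even-cycle components---produces a proper $\Delta$-edge coloring of $G''$ whose restriction to $G'$ is cyclic interval: at a degree-$(\Delta-2)$ vertex the two missing colors are precisely the pair $P_j$ corresponding to the bundle at that vertex, and at a degree-$2$ vertex the $(\Delta-2)/2$ bundles exhaust all but one factor $F_{j^*}$, so the two real edges fall into $F_{j^*}$ and receive the consecutive colors of $P_{j^*}$.

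For odd $\Delta$ only degree-$2$ vertices impose a constraint, and $\Delta-1$ is even. The plan is to build $G''$ analogously, this time bundling $(\Delta-3)/2$ pairs of virtual edges at each degree-$2$ vertex and earmarking one further virtual edge there for inclusion in a distinguished perfect matching $M_\Delta$ of $G''$; extract such an $M_\Delta$, assign it the color $\Delta$, and apply the even-case bundle-preserving $2$-factor argument to the $(\Delta-1)$-regular bipartite multigraph $G''-M_\Delta$ with the pairs $P_i = \{2i-1, 2i\}$, $i = 1, \ldots, (\Delta-1)/2$. At every degree-$2$ vertex the two real edges then end up in a common $2$-factor and hence receive two consecutive colors modulo $\Delta$. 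The central obstacle in both cases is the bundle-preserving $2$-factor decomposition together with (in the odd case) the existence of a perfect matching compatible with the earmarking; both reduce to Petersen's Theorem via the contraction trick, though verifying the parity conditions at degree-$1$ vertices---whose unique real edge may have to be paired with a virtual edge when no auxiliary partner is available---requires a short case analysis that is the most delicate ingredient of the construction.
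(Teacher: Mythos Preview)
Your overall strategy---extend to a $\Delta$-regular multigraph, apply Petersen's Theorem, and colour each $2$-factor with a consecutive pair $\{2i-1,2i\}$---is exactly the idea in the paper. The gap is in the ``bundle-preserving'' step.

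The contraction trick does not work as stated. At a degree-$2$ vertex of $G'$ you attach $(\Delta-2)/2$ bundles, while at a degree-$(\Delta-2)$ vertex you attach only one; after contracting each bundle to a single edge the degrees become $2+(\Delta-2)/2$ and $(\Delta-2)+1=\Delta-1$ respectively, so the contracted multigraph is not regular and Petersen's Theorem does not apply. Even if you arranged the auxiliary structure so that the contracted graph \emph{were} $2s$-regular, re-expanding a super-edge inside a $2$-factor $F$ replaces one edge by two parallel edges, raising the degree of both endpoints in $F$ to $3$; the expanded $F$ is no longer a $2$-factor, and the number of classes $s=(\Delta-b)/2$ does not match the $\Delta/2$ needed for $G''$. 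The same defect propagates to your odd-$\Delta$ reduction, which relies on the even case. The parity difficulty at degree-$1$ vertices that you flag as ``the most delicate ingredient'' is a symptom of the same problem: bundles force an even count of virtual edges, which is incompatible with odd residual degree.

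The paper sidesteps all of this by using \emph{loops} rather than bundles. After first doubling $G$ and joining every odd-degree vertex to its copy (so all degrees are even), one adds $\tfrac{\Delta}{2}-k$ loops at each vertex of degree $2k$ to obtain a $\Delta$-regular multigraph $G^\star$. A loop contributes $2$ to the degree and is itself a $2$-regular component, so in \emph{any} Petersen decomposition each loop automatically sits inside a single $2$-factor---no contraction, no constraint to enforce. Deleting the loops leaves even subgraphs $F'_1,\dots,F'_{\Delta/2}$, each a disjoint union of even cycles (the loop-free graph is bipartite), which one $2$-colours with $2i-1,2i$. The palette at a vertex of $G$ is then a union of colour-pairs, possibly with one colour removed by the doubling edge; a short count gives $\mathrm{def}_c(v,\beta)\le \Delta-2-d_G(v)$ for $3\le d_G(v)\le \Delta-3$ and $0$ otherwise. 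With this device the preliminary attachment of pendants is unnecessary (though harmless): one reads the deficiency bound off directly from the colouring of $G$.
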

\begin{proof} 
In the proof we follow the idea from the proof of Theorem 1 in
\cite{AsratianCasselgrenPetrosyan}. 

First we consider the case when $\Delta(G)$ is even.
In this case we construct a new multigraph $G^{\star}$ as follows:
first we take two isomorphic copies $G_{1}$ and $G_{2}$ of the graph
$G$ and join by an edge every vertex with an odd vertex degree in
$G_{1}$ with its copy in $G_{2}$; then for each vertex $u \in
V(G_{1}) \cup V(G_{2})$ of degree $2k$, we add 
$\frac{\Delta(G)}{2}-k$ loops at $u$ $\left(1\leq k< \frac{\Delta(G)}{2}\right)$. Clearly, $G^{\star}$ is a $\Delta(G)$-regular
multigraph. By Petersen's theorem, $G^{\star}$ can be represented as a union
of edge-disjoint $2$-factors $F_{1},\ldots,F_{\frac{\Delta(G)}{2}}$. By removing all
loops from $2$-factors $F_{1},\ldots,F_{\frac{\Delta(G)}{2}}$ of $G^{\star}$, we
obtain that the resulting graph $G^{\prime}$ is a union of edge-disjoint
even subgraphs $F^{\prime}_{1},\ldots,F^{\prime}_{\frac{\Delta(G)}{2}}$. Since
$G^{\prime}$ is bipartite, for each $i$ $\left(1\leq i\leq \frac{\Delta(G)}{2}\right)$,
$F^{\prime}_{i}$ is a collection of even cycles in $G^{\prime}$, and
we can color the edges of $F^{\prime}_{i}$ alternately with colors
$2i-1$ and $2i$.  The resulting coloring  $\alpha$
is a proper edge coloring of
$G^{\prime}$ with colors $1,\ldots,\Delta(G)$. Since for each vertex $u\in
V(G^{\prime})$ with $d_{G^{\prime}}(u)=2k$ $\left(1\leq k\leq \frac{\Delta(G)}{2}\right)$, there are $k$
even subgraphs $F^{\prime}_{i_{1}},F^{\prime}_{i_{2}},\ldots,F^{\prime}_{i_{k}}$ such that
$d_{F^{\prime}_{i_{1}}}(u)=d_{F^{\prime}_{i_{2}}}(u)=\cdots=d_{F^{\prime}_{i_{k}}}(u)=2$, we obtain that
$S_{G^{\prime}}(u,\alpha)=\{2i_{1}-1,2i_{1},2i_{2}-1,2i_{2},\ldots,2i_{k}-1,2i_{k}\}$.
This implies that for the restriction $\beta$ of this proper edge
coloring to the edges of the graph $G$, 
we have $$\mathrm{def}_{c}(G,\beta)\leq \sum\limits_{\begin{subarray}{c} v\in V(G),\\3\leq d_{G}(v)\leq \Delta(G)-3 \end{subarray}} (\Delta(G)-2-d_{G}(v)).$$ 

Next we consider the case when $\Delta(G)$ is odd.
In this case we construct a new graph $G^{\star\star}$ as follows:
we take two isomorphic copies of the graph $G$ and join by an edge 
every vertex of
maximum degree with its copy. 
Clearly, $G^{\star\star}$ is a bipartite graph with $\Delta(G^{\star\star})=\Delta(G)+1$. Now the result follows from the first part of the proof. 
~$\square$
\end{proof}

In the following, for a graph $G$, we denote by $V_{k}(G)$ (or just $V_k$) 
the set of vertices of degree $k$ in $G$.

\begin{corollary}
\label{mycorollary6}
    If $G$ is a bipartite graph with $\Delta(G)\leq 6$,
	then $\mathrm{def}_{c}(G) \leq \vert V_{3}\vert$.
\end{corollary}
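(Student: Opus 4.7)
The plan is to derive the corollary as a direct case analysis based on the value of $\Delta(G)$, combining the cited result of \cite{AsratianCasselgrenPetrosyan} (that bipartite graphs with maximum degree at most $4$ are cyclically interval colorable) with Proposition \ref{prop:upperbound}. There is no real obstacle here; the whole point is that for $\Delta(G)\in\{5,6\}$ the index set in the sum of Proposition \ref{prop:upperbound} collapses to exactly $V_3(G)$, and each summand equals $1$.

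More concretely, first I would dispose of the cases $\Delta(G)\leq 4$ by appealing to the result of \cite{AsratianCasselgrenPetrosyan}: every bipartite graph with $\Delta(G)\leq 4$ admits a cyclic interval coloring, hence $\mathrm{def}_c(G)=0\leq|V_3|$.

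Next, for $\Delta(G)=5$ (odd), Proposition \ref{prop:upperbound} yields
\[
\mathrm{def}_c(G)\;\leq\;\sum_{\substack{v\in V(G),\\ 3\leq d_G(v)\leq 3}}\bigl(5-1-d_G(v)\bigr)
\;=\;\sum_{v\in V_3}1\;=\;|V_3|,
\]
since $\Delta(G)-2=3$ and the only integer in the range $[3,3]$ is $3$, giving $d_G(v)=3$ and contribution $5-1-3=1$ per vertex. For $\Delta(G)=6$ (even), Proposition \ref{prop:upperbound} gives
\[
\mathrm{def}_c(G)\;\leq\;\sum_{\substack{v\in V(G),\\ 3\leq d_G(v)\leq 3}}\bigl(6-2-d_G(v)\bigr)
\;=\;\sum_{v\in V_3}1\;=\;|V_3|,
\]
because $\Delta(G)-3=3$, so again only vertices of degree $3$ contribute, each with $6-2-3=1$.

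Combining the three cases yields $\mathrm{def}_c(G)\leq|V_3|$ for every bipartite graph $G$ with $\Delta(G)\leq 6$, which is exactly the claim. The only step that requires external input is the $\Delta(G)\leq 4$ case, and for that the cited theorem from \cite{AsratianCasselgrenPetrosyan} suffices.
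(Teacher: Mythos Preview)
Your proof is correct and matches the paper's intended argument: the corollary is stated without proof immediately after Proposition~\ref{prop:upperbound}, and your case analysis on $\Delta(G)$---invoking \cite{AsratianCasselgrenPetrosyan} for $\Delta(G)\le 4$ and reading off the sum in Proposition~\ref{prop:upperbound} for $\Delta(G)\in\{5,6\}$---is exactly the derivation the paper has in mind.
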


\begin{corollary}
\label{mycorollary7}
	If $G$ is a bipartite graph where, for some $r \geq 2$, all vertex degrees are
	in the set $\{2r-3,2r-2, 2r-1,2r\}$,
	then $\mathrm{def}_{c}(G) \leq \vert V_{2r-3}\vert$.
\end{corollary}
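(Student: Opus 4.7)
My plan is to derive Corollary \ref{mycorollary7} as a direct consequence of Proposition \ref{prop:upperbound}, together with a short separate treatment of the small-$\Delta(G)$ cases where the proposition does not apply. Under the hypothesis every occurring degree lies in $\{2r-3,2r-2,2r-1,2r\}$, so in particular $\Delta(G)$ itself belongs to this set and every vertex of $G$ has degree at least $2r-3$.

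First I would dispatch the range $\Delta(G)\leq 4$, which covers $r=2$ entirely (where degrees lie in $\{1,2,3,4\}$) and, when $r=3$, the subcases $\Delta(G)\in\{3,4\}$. For these, the result of \cite{AsratianCasselgrenPetrosyan} that every bipartite graph with maximum degree at most $4$ is cyclically interval colorable gives $\mathrm{def}_c(G)=0\leq\vert V_{2r-3}\vert$ outright.

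For the main range $\Delta(G)\geq 5$ I would invoke Proposition \ref{prop:upperbound} and evaluate its sum under the degree constraint, split according to which of the four values $\Delta(G)$ takes.
\begin{itemize}
\item If $\Delta(G)=2r$ (even) or $\Delta(G)=2r-1$ (odd), the proposition's indexing interval $[3,\Delta(G)-3]$ respectively $[3,\Delta(G)-2]$ both reduce to $[3,2r-3]$, and among the admissible degrees only $d=2r-3$ falls in this interval; its per-vertex contribution $(\Delta(G)-2-d)$ respectively $(\Delta(G)-1-d)$ equals $1$ in either case, and so the bound is exactly $\vert V_{2r-3}\vert$.
\item If $\Delta(G)=2r-2$ (even) or $\Delta(G)=2r-3$ (odd), the proposition's indexing interval becomes $[3,2r-5]$, which is disjoint from $\{2r-3,2r-2,2r-1,2r\}$ since every occurring degree is at least $2r-3>2r-5$; the sum is therefore empty and $\mathrm{def}_c(G)=0\leq\vert V_{2r-3}\vert$. (Note that $\Delta(G)=2r-3$ also forces $G$ to be $(2r-3)$-regular bipartite, which is separately known to be interval colorable.)
\end{itemize}

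I do not expect a substantive obstacle: the proof is essentially bookkeeping around Proposition \ref{prop:upperbound}. The only point to handle carefully is the boundary $\Delta(G)\leq 4$, where the proposition's hypothesis $\Delta(G)\geq 5$ fails and one must fall back on the independent cyclic-colorability result for bipartite graphs of small maximum degree.
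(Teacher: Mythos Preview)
Your proposal is correct and follows essentially the same approach as the paper, which presents the corollary without proof as an immediate consequence of Proposition~\ref{prop:upperbound}. Your explicit handling of the boundary range $\Delta(G)\leq 4$ via the cyclic-colorability result for bipartite graphs of small maximum degree is a welcome bit of care that the paper simply leaves implicit.
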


In the preceding section we proved
that there are families of bipartite graphs $G_{n}$ such that $\lim_{n\rightarrow \infty} \frac{\mathrm{def}_{c}(G_{n})}{\vert V(G_{n})\vert} = 1$. 
%
On the other hand, we do not know graphs $G$ with 
$\mathrm{def}_{c}(G)>\vert V(G)\vert$.  
Hence, the following conjecture seems natural:

\begin{conjecture}
\label{conj:upperbound}
For any graph $G$, $\mathrm{def}_{c}(G)\leq \vert V(G)\vert$.
\end{conjecture}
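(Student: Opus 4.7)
The plan is to attempt a proof in stages, building on the structural observation that underpins all known results on cyclic deficiency: a $\Delta$-regular graph $G$ has $\mathrm{def}_{c}(G)=0$. Indeed, Vizing's theorem provides a proper edge coloring of $G$ with $\Delta$ or $\Delta+1$ colors; in the former case each vertex sees the full interval $\{1,\dots,\Delta\}$, and in the latter each vertex sees $\Delta$ colors out of $\Delta+1$ with the single missing color forming a (trivial) interval, so $S(v,\alpha)$ is a cyclic interval in both cases. Accordingly, the overarching strategy is to attach pendants to $G$ so that the resulting graph either becomes regular or acquires cyclic deficiency zero at every vertex, while using at most $|V(G)|$ pendants.

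First I would dispatch the case $\Delta(G)-\delta(G)\leq 2$, which the authors announce as already proved. The proof splits by the parity of $\Delta(G)$: when $\Delta(G)$ is even, double $G$ and insert loops at vertices of smaller degree to produce a $\Delta(G)$-regular multigraph $G^{\star}$ in the spirit of the proof of Proposition \ref{prop:upperbound}, apply Petersen's theorem (Theorem \ref{th:Petersen}) to obtain a $2$-factorization, and color each $2$-factor with two consecutive colors. The odd cycles that may appear (because $G$ is not assumed bipartite) each force a third color at a single vertex, and this local defect can be absorbed by attaching one pendant at a well-chosen vertex of the cycle. When $\Delta(G)$ is odd, first join the maximum-degree vertices of two isomorphic copies of $G$ by a perfect matching to reduce to the even case. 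A careful charging argument should certify that at most one pendant is needed per vertex of $G$.

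To extend beyond $\Delta(G)-\delta(G)\leq 2$, my plan is a two-level induction: outer induction on $\Delta(G)-\delta(G)$, inner induction on $|V(G)|$. In the inductive step I would peel off a carefully chosen subgraph --- for instance the induced subgraph on vertices of minimum degree, or an appropriate factor --- apply the inductive hypothesis to the remainder, and splice the two colorings along the interface, adjusting colors by alternating-path swaps so that the cyclic-interval condition is preserved at the boundary vertices. A complementary angle combines the Petersen decomposition with Theorem \ref{th:Fournier}: when the vertices of maximum degree form an (almost) independent set, $G$ is Class $1$ and the chromatic index aligns neatly with the cyclic-interval structure, allowing interval and near-interval patches to be stitched together efficiently.

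The main obstacle, and presumably the reason the conjecture remains open, is the interaction between odd cycles in the $2$-factor decomposition and the cyclic-interval condition at vertices shared by several $2$-factors. Every odd cycle in a $2$-factor demands a third color somewhere, and a purely local patch risks attaching a pendant at each affected vertex, easily exceeding $|V(G)|$ when many $2$-factors contain long odd cycles through common vertices. The crux is to amortize these fixes so that every vertex is charged at most one pendant overall; I would pursue this by rerouting odd cycles via Vizing fans, or by swapping color classes between $2$-factors before adding any pendants. The constructions of Section $4$ achieve cyclic deficiencies that are only a small fraction of $|V(G)|$, which suggests that such an amortization --- perhaps also leveraging the diameter bound of Theorem \ref{mytheorem6} --- has enough slack to succeed.
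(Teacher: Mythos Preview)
This statement is Conjecture~\ref{conj:upperbound}, which the paper explicitly leaves \emph{open}. The paper does not contain a proof; it only establishes the conjecture for restricted classes (graphs with $\Delta(G)\leq 5$ in Theorem~\ref{prop:maxdegree5}, bipartite graphs with $\Delta(G)\leq 8$, and graphs with $\Delta(G)-\delta(G)\leq 2$ in Theorem~\ref{prop:smalldiff}). So there is no ``paper's own proof'' to compare your proposal against, and your submission cannot be accepted as a proof of the statement.

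Your proposal is in any case a strategy outline, not a proof, and you yourself identify the obstruction correctly: in the Petersen $2$-factor approach, odd cycles force extra colors and the required amortization --- charging at most one pendant per vertex globally --- is exactly what nobody knows how to do. Your inductive scheme on $\Delta(G)-\delta(G)$ is vague at the crucial step (``splice the two colorings along the interface, adjusting colors by alternating-path swaps'') and there is no argument that such swaps preserve the cyclic-interval condition at all boundary vertices simultaneously; Kempe-type swaps typically fix one vertex while breaking others. The paper's actual proof of the $\Delta-\delta\leq 2$ case (Theorem~\ref{prop:smalldiff}) does \emph{not} proceed via Petersen and doubling as you suggest, but rather by peeling off three nested matchings $M,M',M''$ using Theorem~\ref{th:Fournier} and coloring $G[\hat M]$ with two high colors --- a quite different mechanism that does not obviously generalize to larger $\Delta-\delta$ either. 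In short: reasonable heuristics, aligned in spirit with the paper's partial results, but no proof is present and the conjecture remains open.
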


Note that the above results imply that this conjecture holds for
bipartite graphs with maximum degree at most $6$.
Moreover, 
in \cite{AsratianCasselgrenPetrosyan} 
we 
proved that every bipartite graph $G$ where all vertex
degrees are in the set $\{1,2,4,6,7,8\}$ has a cyclic interval coloring. This result implies that for every bipartite graph $G$ with $\Delta(G)\leq 8$, we have $\mathrm{def}_{c}(G) \leq |V_{3}| + |V_{5}|$.
Thus Conjecture \ref{conj:upperbound} holds for any bipartite graph
with maximum degree at most $8$.

Next, we consider biregular bipartite graphs; such
graphs have been conjectured to always admit cyclic interval colorings
\cite{CarlJToft}.
Corollary \ref{mycorollary7} implies that $(2r-3, 2r)$-biregular and $(2r-3,2r-1)$-biregular graphs satisfy Conjecture \ref{conj:upperbound}. We shall
use the following proposition for establishing that Conjecture 
\ref{conj:upperbound} holds for $(2r-4, 2r)$-biregular graphs; 
since any bipartite graph with maximum degree $8$ satisfies 
Conjecture \ref{conj:upperbound}, we assume that 
$r\geq 5$.



\begin{proposition}
\label{prop:bipdiff4}
	If $G$ is a bipartite graph where, for some $r \geq 5$, all vertex degrees are
	in the set $\{2r-4, 2r-3,2r-2, 2r-1, 2r\}$ and 
	$$\frac{|V_{2r-4}|}{|V_{2r-2}| + |V_{2r-1}| + |V_{2r}|}  
	\leq \frac{r-1}{r-5},$$
	then $\mathrm{def}_{c}(G) \leq |V(G)|$.
\end{proposition}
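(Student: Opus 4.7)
The plan is to refine the Petersen decomposition coloring of Proposition~\ref{prop:upperbound} by averaging over the labeling of the $2$-factors. I would first form the $2r$-regular multigraph $G^{\star}$ by taking two disjoint copies $G_{1}, G_{2}$ of $G$, joining every odd-degree vertex to its twin, and then adding loops so that each vertex in $V_{2r-4}$ receives two loops and each vertex in $V_{2r-2}$ receives one loop. Writing $G = A \cup B$, the loopless part of $G^{\star}$ is bipartite with parts $A_{1} \cup B_{2}$ and $B_{1} \cup A_{2}$, and Petersen's Theorem supplies an edge-disjoint decomposition $G^{\star} = F_{1} \cup \cdots \cup F_{r}$ into $2$-factors whose loopless parts are unions of even cycles.

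Next, rather than fixing the color pair $\{2i-1, 2i\}$ for $F_{i}$ as in Proposition~\ref{prop:upperbound}, I would introduce a permutation $\pi \in S_{r}$ and color $F_{a}$ alternately with the pair $\{2\pi(a)-1, 2\pi(a)\}$; this yields a proper $2r$-edge-coloring of $G^{\star}$ which restricts to a proper edge coloring $\beta$ of $G$. Using that the cyclic deficiency at $v$ equals the number of missing colors minus the size of the largest cyclic run of missing colors, one checks: every vertex in $V_{2r} \cup V_{2r-1} \cup V_{2r-2}$ has deficiency $0$; every vertex in $V_{2r-3}$ has deficiency at most $1$ (regardless of $\pi$), since the missing set is a pair plus a singleton; and a vertex $v \in V_{2r-4}$ whose two loops lie in $F_{a}, F_{b}$ has deficiency $0$ when $\pi(a), \pi(b)$ are cyclically adjacent modulo $r$ and deficiency $2$ otherwise. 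For uniformly random $\pi \in S_{r}$, the pair $\{\pi(a), \pi(b)\}$ is uniform over $\binom{r}{2}$ unordered pairs, so this cyclic adjacency occurs with probability $r/\binom{r}{2} = 2/(r-1)$.

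By linearity of expectation,
\[
\mathbb{E}_{\pi}[\mathrm{def}_{c}(G,\beta)] \leq \left(2 - \frac{4}{r-1}\right)|V_{2r-4}| + |V_{2r-3}| = \frac{2r-6}{r-1}|V_{2r-4}| + |V_{2r-3}|,
\]
so some permutation attains this bound. A direct rearrangement shows that the hypothesis $(r-5)|V_{2r-4}| \leq (r-1)(|V_{2r-2}| + |V_{2r-1}| + |V_{2r}|)$ is equivalent to
\[
\left(\frac{2r-6}{r-1} - 1\right)|V_{2r-4}| \leq |V_{2r-2}| + |V_{2r-1}| + |V_{2r}|,
\]
which, after adding $|V_{2r-4}| + |V_{2r-3}|$ to both sides, gives exactly $\mathrm{def}_{c}(G) \leq |V(G)|$.

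The main obstacle I expect is tightening the pointwise bound at $V_{2r-4}$ vertices beyond the constant $\Delta(G)-2-d_{G}(v) = 2$ furnished by Proposition~\ref{prop:upperbound}. The key insight is that in this construction the deficiency at such a vertex is always either $0$ or $2$, controlled entirely by whether the two factor labels carrying its loops are cyclically consecutive; averaging over a uniformly random labeling of the $2$-factors then converts this Bernoulli-type random variable into the mean $(2r-6)/(r-1)$, which is precisely what the hypothesis on the ratio $|V_{2r-4}|/(|V_{2r-2}|+|V_{2r-1}|+|V_{2r}|)$ permits.
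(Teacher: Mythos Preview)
Your proposal is correct and follows essentially the same route as the paper: build a $2r$-regular multigraph, take a Petersen decomposition into $2$-factors, color each factor with a consecutive pair, and observe that a vertex of degree $2r-4$ has deficiency $0$ or $2$ according to whether its two loop-carrying factors receive cyclically adjacent labels. The paper phrases the averaging step as ``we may assume that at least $\frac{r}{\binom{r}{2}}|V_{2r-4}|$ vertices \ldots'', whereas you make it explicit via a uniformly random $\pi\in S_r$ and linearity of expectation; the computation $r/\binom{r}{2}=2/(r-1)$ and the final inequality manipulation are identical. One small slip: in your construction of $G^{\star}$ you also need to add a loop at each vertex of $V_{2r-3}$ (after joining to the twin such a vertex has degree $2r-2$, not $2r$); your later remark that at a $V_{2r-3}$ vertex ``the missing set is a pair plus a singleton'' shows you are implicitly using this, so just state it when you list the loops.
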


\begin{proof}
	Let $G'$ be the multigraph obtained by 
	adding two loops at a vertex of degree $2r-4$ in $G$ and
	one loop at any vertex of degree $2r-3$ or $2r-2$ in $G$.
	Let $H$ be the multigraph obtained from two copies $G_1$ and $G_2$ of
	$G'$ where any vertex of odd degree in $G_1$ is joined by an edge
	to its corresponding vertex in $G_2$. Clearly, $H$ is $2r$-regular.
	
	Proceeding as in the proof of Proposition \ref{prop:upperbound},
	let $\alpha$ be the cyclic interval $2r$-coloring of $H$
	obtained by decomposing $H$ into $2$-factors and properly edge coloring
	each factor of $H$ by $2$ consecutive colors. 
	
	Denote by $\varphi$ the restriction of $\alpha$ to $G$. Then
	for any vertex $v$ of $G$,
	$S_{G}(v, \varphi)$ is a cyclic interval modulo $2r$
	if $v$ has degree $2r-2$, $2r-1$ or $2r$ in $G$. Moreover,
	if $v$ has degree $2r-3$ then $S_{G}(v, \varphi)$ has
	cyclic deficiency at most $1$, and if $v$ has degree $2r-4$,
	then $S_{G}(v, \varphi)$ has cyclic deficiency at most $2$.
	
	Consider a vertex $v$ of degree $2r-4$ in $G$.
	If for some $i \in \{1,\dots, 2r\}$,
	the colors in $\{i, i+1, i+2, i+3 \}$ (where numbers are
	taken modulo $2r$) do not appear in $S_{G}(v, \varphi)$,
	then $S_{G}(v, \varphi)$ is a cyclic interval modulo $2r$.
	Thus we may assume that at least $\frac{r}{\binom{r}{2}} |V_{2r-4}|$
	vertices $v$ in $V_{2r-4}$ satisfy that $S_{G}(v, \varphi)$
	is a cyclic interval modulo $2r$. Hence, if
	\begin{equation}
	\label{eq:numvertices}
	|V_{2r-4}| \left(1-\frac{2}{r-1}\right) \leq 
	\frac{2}{r-1}|V_{2r-4}| + |V_{2r-2}| + |V_{2r-1}| + |V_{2r}|,
	\end{equation}
	then the number of vertices with cyclic deficiency zero is at least
	the number of vertices wich cyclic deficiency two, which implies
	that $\mathrm{def}_{c}(G) \leq |V(G)|$.
	Now, \eqref{eq:numvertices} holds by assumption, so the required
	result follows.
	~$\square$
\end{proof}

\begin{corollary}
\label{cor:biregular}
		If $G$ is a $(2r-4, 2r)$-biregular graph,
		then $\mathrm{def}_{c}(G) \leq |V(G)|$.
\end{corollary}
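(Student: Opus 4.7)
The plan is to reduce to Proposition \ref{prop:bipdiff4}, so the work is bookkeeping: verify the hypothesis of that proposition for the special case of a $(2r-4, 2r)$-biregular graph, handling separately the small values of $r$ where the proposition does not apply.

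First I would dispose of the cases $r \leq 4$. A $(2r-4, 2r)$-biregular graph with $r\leq 4$ has $\Delta(G) \leq 8$, and the paragraph immediately preceding Proposition \ref{prop:bipdiff4} already notes that Conjecture \ref{conj:upperbound} holds for every bipartite graph with maximum degree at most $8$ (using the result from \cite{AsratianCasselgrenPetrosyan} on bipartite graphs with vertex degrees in $\{1,2,4,6,7,8\}$). So henceforth I assume $r \geq 5$.

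For $r \geq 5$, all vertex degrees lie in $\{2r-4, 2r\} \subseteq \{2r-4, 2r-3, 2r-2, 2r-1, 2r\}$, so the degree-set hypothesis of Proposition \ref{prop:bipdiff4} is met. It remains to verify the numeric hypothesis
$$\frac{|V_{2r-4}|}{|V_{2r-2}|+|V_{2r-1}|+|V_{2r}|} \leq \frac{r-1}{r-5}.$$
Since $V_{2r-2}=V_{2r-1}=\emptyset$ in a $(2r-4,2r)$-biregular graph, the left side reduces to $|V_{2r-4}|/|V_{2r}|$. Counting edges from each side of the bipartition gives the identity $(2r-4)|V_{2r-4}| = 2r\,|V_{2r}|$, so
$$\frac{|V_{2r-4}|}{|V_{2r}|} = \frac{r}{r-2}.$$
For $r=5$, the right side $\tfrac{r-1}{r-5}$ is a division by zero, but this case can be handled by going back to inequality (\ref{eq:numvertices}) in the proof of Proposition \ref{prop:bipdiff4} directly: it reduces to $0 \leq |V_{2r-2}|+|V_{2r-1}|+|V_{2r}|$, which is trivially true. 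For $r \geq 6$ one checks by cross-multiplication that $\frac{r}{r-2} \leq \frac{r-1}{r-5}$ is equivalent to $r(r-5) \leq (r-1)(r-2)$, i.e.\ $-2r \leq 2$, which holds for all $r \geq 1$.

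Thus the hypothesis of Proposition \ref{prop:bipdiff4} (or its proof, in the edge case $r=5$) is satisfied, and we conclude $\mathrm{def}_{c}(G) \leq |V(G)|$. There is no real obstacle here; the only subtlety is the degenerate case $r=5$, where the stated fraction $(r-1)/(r-5)$ is formally undefined but the underlying inequality (\ref{eq:numvertices}) from Proposition \ref{prop:bipdiff4} still delivers the conclusion.
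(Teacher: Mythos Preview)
Your proposal is correct and follows exactly the same route as the paper: reduce to Proposition~\ref{prop:bipdiff4} by edge-counting to get $|V_{2r-4}|/|V_{2r}| = r/(r-2)$ and then check this against $(r-1)/(r-5)$. You are in fact more careful than the paper, which silently passes over the degenerate case $r=5$ where the hypothesis fraction is formally undefined; your observation that inequality~\eqref{eq:numvertices} still holds in that case is the right way to close the gap.
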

\begin{proof}
	Let $X$ and $Y$ be the parts of $G$, where vertices in $X$ have degree
	$2r-4$.
	Since $G$ is $(2r-4,2r)$-biregular,
	we have $(2r-4)|X|  = 2r|Y|$, which implies
	that $$\frac{|X|}{|Y|} \leq \frac{r}{r-2},$$
	and so the result follows from Proposition \ref{prop:bipdiff4}.
	~$\square$
\end{proof}
\bigskip


In the following we shall present some further results 
supporting Conjecture \ref{conj:upperbound}. 
We begin 
by showing that any graph with maximum degree at most $5$
satisfies this conjecture. 
We shall use the following observation, the proof is just
straightforward case analysis and is left to the reader.

\begin{lemma}
	\label{lem:6coloring}
		If $\varphi$ is a proper $6$-edge coloring of a graph $G$ 
		with maximum degree $5$,
		then for every vertex $v$ of $G$,
		$S_G(v, \varphi)$ is near-cyclic modulo $6$ unless
		$$S_G(v,\varphi) \in \left\{\{1,4\}, \{2,5\}, 
	\{3,6\}, \{1,3,5\}, \{2,4,6\} \right\}.$$
	
\end{lemma}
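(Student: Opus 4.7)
The plan is a direct case analysis on $|S|$ where $S = S_G(v,\varphi) \subseteq \{1,\ldots,6\}$. Since $\Delta(G) \leq 5$, we have $|S| \in \{1,2,3,4,5\}$. The key preliminary observation is that $A \subseteq \{1,\ldots,6\}$ is a cyclic interval modulo $6$ iff $A$ forms a contiguous arc on the cycle $1\text{--}2\text{--}3\text{--}4\text{--}5\text{--}6\text{--}1$; consequently, $S$ is near-cyclic iff $S$ is contained in some such arc of length $|S|$ or $|S|+1$.

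I would first dispose of $|S| \in \{1,4,5\}$: singletons are trivially cyclic intervals; a $5$-subset has singleton complement and is itself a cyclic interval; and a $4$-subset $\{1,\ldots,6\}\setminus\{a,b\}$ becomes a cyclic interval after adjoining $a$, since the resulting complement $\{b\}$ is an interval. The case $|S|=2$ is then handled by observing that a $2$-subset is near-cyclic iff it is either a length-$2$ arc or contained in some length-$3$ arc, and that the only pairs failing both conditions are those at cyclic distance $3$, namely $\{1,4\}$, $\{2,5\}$, $\{3,6\}$.

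The only substantive case is $|S|=3$. The length-$3$ arcs are the six sets $\{i,i+1,i+2\}$ (indices modulo $6$), and their complements are the six length-$4$ arcs; a $3$-subset is near-cyclic iff it lies inside at least one of these twelve cyclic intervals. I would finish the case by a gap argument: if $S=\{a,b,c\}$ fits in no arc of length $\leq 4$, then the three cyclic gaps between consecutive elements of $S$ around the $6$-cycle must each be at least $2$, and since they sum to $6$ they must each equal $2$, forcing $S=\{1,3,5\}$ or $S=\{2,4,6\}$. Every other $3$-subset sits inside some length-$3$ or length-$4$ arc and hence is near-cyclic.

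Combining the cases produces exactly the five exceptional sets in the statement. The main (mild) obstacle is ensuring the enumeration in the $|S|=3$ subcase is exhaustive, which the gap-sum observation handles cleanly; no other step carries any subtlety.
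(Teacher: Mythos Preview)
Your proof is correct and is precisely the kind of straightforward case analysis the paper has in mind; in fact the paper does not give any proof of this lemma, simply stating that ``the proof is just straightforward case analysis and is left to the reader.'' Your reformulation of near-cyclicity as containment in an arc of length $|S|$ or $|S|+1$, together with the gap-sum argument for $|S|=3$, is a clean way to carry this out.
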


\begin{theorem}
\label{prop:maxdegree5}
	If $G$ is a graph with maximum degree at most $5$, 
	then $\mathrm{def}_{c}(G)\leq |V(G)|$.
\end{theorem}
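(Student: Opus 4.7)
The plan is to combine Vizing's theorem with an averaging argument over color permutations, using Lemma~\ref{lem:6coloring} as the key local ingredient. Since $\Delta(G)\leq 5$, Theorem~\ref{th:Vizing} gives $\chi'(G)\leq 6$, so $G$ admits a proper $6$-edge coloring $\varphi$. For any permutation $\sigma$ of $\{1,\ldots,6\}$, the composition $\sigma\circ\varphi$ is again a proper $6$-edge coloring, and $S_G(v,\sigma\circ\varphi)=\sigma(S_G(v,\varphi))$ for every $v$. The crucial point is that if $\sigma$ is drawn uniformly at random from the symmetric group on $\{1,\ldots,6\}$, then for each vertex $v$ the set $S_G(v,\sigma\circ\varphi)$ is uniformly distributed over all $d_G(v)$-subsets of $\{1,\ldots,6\}$.

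Next, I would compute the expected cyclic deficiency at a vertex of each degree by direct enumeration, using Lemma~\ref{lem:6coloring} to pin down which sets have deficiency exceeding $1$. For $d_G(v)\in\{0,1,5\}$ the set $S_G(v,\sigma\circ\varphi)$ (or its complement) is automatically a cyclic interval modulo $6$, so the expected deficiency is $0$. For $d_G(v)=2$, the $15$ two-subsets split into $6$ cyclic intervals (consecutive pairs), $6$ pairs at cyclic distance $2$ with deficiency $1$, and the $3$ bad pairs $\{1,4\},\{2,5\},\{3,6\}$ of deficiency $2$, yielding expected deficiency $12/15=4/5$. For $d_G(v)=3$ the $20$ three-subsets split into $6$ cyclic intervals, $12$ near-cyclic non-cyclic triples, and the $2$ bad triples $\{1,3,5\},\{2,4,6\}$ of deficiency $2$, giving $16/20=4/5$. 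For $d_G(v)=4$ every one of the $15$ four-subsets is near-cyclic (its size-$2$ complement shrinks to a singleton after adding a single element), with $6$ cyclic and $9$ of deficiency $1$, giving $9/15=3/5$.

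Linearity of expectation then yields
\[
E[\mathrm{def}_{c}(G,\sigma\circ\varphi)]=\tfrac{4}{5}|V_{2}|+\tfrac{4}{5}|V_{3}|+\tfrac{3}{5}|V_{4}|\leq|V_{2}|+|V_{3}|+|V_{4}|\leq|V(G)|,
\]
so some permutation $\sigma$ realizes $\mathrm{def}_{c}(G,\sigma\circ\varphi)\leq|V(G)|$ and hence $\mathrm{def}_{c}(G)\leq|V(G)|$. The main technical step is the case analysis for $d_G(v)\in\{2,3,4\}$, but Lemma~\ref{lem:6coloring} does essentially all the work by already isolating the exceptional sets, so only routine counting remains; the decisive feature is that each nonzero coefficient in the expected-deficiency sum is strictly less than $1$, which is precisely what makes the averaging fit inside the bound $|V(G)|$.
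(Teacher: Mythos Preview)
Your proof is correct and takes a genuinely different route from the paper's. The paper splits into cases on $\Delta(G)$: for $\Delta\leq 3$ it cites a known result giving deficiency $0$; for $\Delta=4$ it uses a proper $5$-coloring and checks directly that every vertex is near-cyclic; for $\Delta=5$ it removes a maximum matching $M$ among the degree-$5$ vertices, $5$-colors $G-M$ via Theorem~\ref{th:Fournier}, assigns color $6$ to $M$, and then permutes colors by hand so that each ``bad'' spectrum $\{1,4\},\{2,5\},\{1,3,5\}$ is balanced in cardinality against a corresponding deficiency-zero spectrum. Your averaging argument is more uniform and arguably more elegant: one $6$-coloring from Vizing, one expectation over $S_6$, linearity, done; it handles all degrees at once and avoids both the Fournier step and the ad hoc color swaps. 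The trade-off is that the paper's case analysis extracts sharper bounds when $\Delta\leq 4$ (deficiency $0$ for $\Delta\leq 3$, and at most $|V_2|+|V_3|$ for $\Delta=4$), whereas your bound $\tfrac{4}{5}|V_2|+\tfrac{4}{5}|V_3|+\tfrac{3}{5}|V_4|$ is looser there but still comfortably inside $|V(G)|$. One minor technicality: Lemma~\ref{lem:6coloring} is literally stated only for graphs with maximum degree $5$, but its content is purely about subsets of $\{1,\ldots,6\}$, and in any case your direct enumeration of the $2$-, $3$-, and $4$-subsets recovers exactly what you need.
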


\begin{proof}
The proposition is true in the case when $\Delta(G)\leq 3$, since
any such graph $G$ admits a cyclic interval coloring 
\cite{Nadol}.  

If $\Delta(G)=4$,
consider  a proper $5$-edge coloring $\alpha$ of $G$; such a coloring
exists by Theorem  \ref{th:Vizing}. For a vertex $v$
of degree $1$ or $4$, clearly $S_G(v, \alpha)$ is a cyclic
interval modulo $5$. Moreover, straightforward case analysis
shows that for any vertex $v$ of degree $2$ or $3$ in $G$,
$S_G(v,\alpha)$ is near-cyclic. Hence,
$\mathrm{def}_{c}(G) \leq |V(G)|$ for any graph $G$ of maximum degree $4$.

Let us now consider a graph $G$ with maximum degree $5$.
		Let $M$ be a maximum matching in $G_5$, where $G_5$ is the subgraph
		of $G$ induced by the vertices of degree $5$ in $G$.
		Then, by Theorem \ref{th:Fournier}, 
		$G-M$ is $5$-edge colorable; let $\varphi$ 
		be a proper $5$-edge coloring of $G$.
		We define some subsets of $V(G)$:
		
		\begin{itemize}
			
			\item $A_{i,j,k}(\varphi)$ is the set of all vertices $v$
			for which $S_G(v, \varphi) = \{i,j,k\}$,
			where $1\leq i <j < k \leq 5$;
			
			\item $A_{i,j}(\varphi)$ is the set of all vertices $v$
			for which $S_G(v, \varphi) = \{i,j\}$, where
			$1\leq i < j\leq 5$.
		
		\end{itemize}
		
		If $|A_{1,3,5}(\varphi)| > |A_{2,3,4}(\varphi)|$,
		then by permuting the colors of $\varphi$ we can construct
		a coloring $\varphi'$ such that
		$|A_{1,3,5}(\varphi')| \leq |A_{2,3,4}(\varphi')|$.
		Thus we may assume that
		$|A_{1,3,5}(\varphi)| \leq |A_{2,3,4}(\varphi)|$.
		
		Now consider the vertices in $A_{1,4}(\varphi)$ and
		$A_{2,5}(\varphi)$. If
		$$|A_{1,4}(\varphi)|  +  |A_{2,5}(\varphi)| >
		|A_{1,2}(\varphi)| + |A_{4,5}(\varphi)|,$$
		then by permuting colors $1$ and $5$
		we obtain a coloring $\varphi'$ satisfying
		$$|A_{1,4}(\varphi')|  +  |A_{2,5}(\varphi')| <
		|A_{1,2}(\varphi')| + |A_{4,5}(\varphi')|,$$
		and, moreover,
		$|A_{1,3,5}(\varphi')| = |A_{1,3,5}(\varphi)|$
		and $|A_{2,3,4}(\varphi')| = |A_{2,3,4}(\varphi)|$.
		
		From the preceding paragraphs we thus conclude that there is 
		a proper $5$-edge coloring $\alpha$ of $G-M$
		such that $|A_{1,3,5}(\alpha)| \leq |A_{2,3,4}(\alpha)|$
		and $$|A_{1,4}(\alpha)|  +  |A_{2,5}(\alpha)| \leq
		|A_{1,2}(\alpha)| + |A_{4,5}(\alpha)|.$$
	
		We now define a proper edge coloring $\alpha'$ by
		setting $\alpha'(e) = 6$ if $e\in M$ and
		$\alpha'(e) = \alpha(e)$ if $e \notin M$.
		By Lemma \ref{lem:6coloring},
		for every vertex $v$ of $G$,
		$S_G(v, \alpha')$ is near-cyclic modulo $6$ unless
		$$S_G(v,\alpha') \in \{\{1,4\}, \{2,5\}, 
		\{3,6\}, \{1,3,5\}, \{2,4,6\} \}.$$
		Clearly, no vertex $v$ in $G$ satisfies 
		$S_{G}(v,\alpha') \in \{\{3,6\}, \{2,4,6\}\}$.
		Thus, a vertex $v$ in $G$ has cyclic deficiency $2$ under $\alpha'$
		if and only if $$S_G(v,\alpha') \in \{\{1,4\}, \{2,5\}, \{1,3,5\}\}.$$
		Now, since
		$|A_{1,3,5}(\alpha')| \leq |A_{2,3,4}(\alpha')|$,
		$$|A_{1,4}(\alpha')|  +  |A_{2,5}(\alpha')| \leq
		|A_{1,2}(\alpha')| + |A_{4,5}(\alpha')|,$$
		and the cyclic deficiencies of the sets $\{2,3,4\}, \{1,2\}$
		and $\{4,5\}$ are all equal to $0$,
		it follows that $\mathrm{def}_{c}(G)\leq |V(G)|$. ~$\square$
		
\end{proof}

\bigskip
\bigskip

All regular graphs trivially have cyclic interval colorings,
as do also all Class 1 graphs $G$ with $\Delta(G) - \delta(G) \leq 1$.
Moreover, any Class 2 graph $G$
with $\Delta(G) - \delta(G) \leq 1$ satisfies Conjecture
\ref{conj:upperbound}. The next proposition shows that a slightly
stronger statement is true.

\begin{theorem}
\label{prop:smalldiff}
	For any graph $G$ with $\Delta(G) - \delta(G) \leq 2$,
	$\mathrm{def}_{c}(G)\leq |V(G)|$.
\end{theorem}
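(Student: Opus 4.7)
The plan is to proceed by cases on the value of $\Delta(G) - \delta(G)$ and on whether $G$ is Class 1 or Class 2. The regular case $\Delta = \delta$ is covered by the remarks preceding the statement. When $\Delta(G) - \delta(G) = 1$, the degrees lie in $\{\Delta-1, \Delta\}$, and I would take a proper $\chi'(G)$-edge coloring: vertices of degree $\Delta$ have missing sets of size $0$ or $1$ modulo $\chi'(G)$ and so contribute nothing, while vertices of degree $\Delta-1$ have missing sets of size $1$ or $2$ and contribute at most $1$ to the deficiency. Thus $\mathrm{def}_c(G) \leq |V_{\Delta-1}(G)| \leq |V(G)|$.

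When $\Delta(G) - \delta(G) = 2$ and $G$ is Class 1, I would take a proper $\Delta$-edge coloring $\varphi$. Vertices of degree $\Delta$ or $\Delta - 1$ have missing sets of size $0$ or $1$ modulo $\Delta$, hence cyclic deficiency $0$; vertices of degree $\Delta - 2$ miss exactly two colors, contributing $1$ to the deficiency precisely when the two missing colors are not cyclically adjacent, so at most $1$ each. Therefore $\mathrm{def}_c(G) \leq |V_{\Delta-2}(G)| \leq |V(G)|$.

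The principal case is $\Delta(G) - \delta(G) = 2$ with $G$ Class 2. Here I would adapt the Fournier-matching plus rebalancing approach from the proof of Theorem \ref{prop:maxdegree5}. Let $M$ be a maximum matching in the subgraph $G_\Delta$ induced by the maximum-degree vertices. By maximality of $M$, any two unmatched vertices of $V_\Delta$ are non-adjacent in $G$, so $G - M$ contains no two adjacent maximum-degree vertices and is Class 1 by Theorem \ref{th:Fournier}. Let $\varphi$ be a proper $\Delta$-edge coloring of $G - M$, and extend to a proper $(\Delta+1)$-edge coloring $\alpha$ of $G$ by setting $\alpha(e) = \Delta + 1$ for every $e \in M$. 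Under $\alpha$, each $v \in V_\Delta$ has cyclic deficiency $0$; each $v \in V_{\Delta-1}$ has missing set $\{k, \Delta+1\}$ for some $k \in \{1, \dots, \Delta\}$, giving deficiency at most $1$; and each $v \in V_{\Delta-2}$ has missing set $\{k_1, k_2, \Delta+1\}$, giving deficiency at most $2$. I would then rebalance $\varphi$ by color permutations and Kempe-chain swaps---in the spirit of the arguments on the sets $A_{1,3,5}, A_{2,3,4}, A_{1,4}, A_{2,5}, A_{1,2}, A_{4,5}$ in the proof of Theorem \ref{prop:maxdegree5}---so that at every $v \in V_{\Delta-2}$ either $\{k_1, k_2\} \cap \{1, \Delta\} \neq \emptyset$ or $k_2 = k_1 + 1$. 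In either case the missing set at $v$ has at most two cyclic runs and the contribution at $v$ drops to at most $1$, giving a total of at most $|V_{\Delta-1}| + |V_{\Delta-2}| \leq |V(G)|$.

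The hard part is exactly this rebalancing step. The crude bound $|V_{\Delta-1}(G)| + 2|V_{\Delta-2}(G)|$ can exceed $|V(G)|$, so the Kempe rearrangements must be chosen carefully---perhaps via a potential-function argument or via averaging over cyclic rotations of the $\Delta$ color labels used on $G - M$---so that reducing the deficiency at one degree-$(\Delta-2)$ vertex neither pushes another from $1$ to $2$ nor breaks the deficiency-$\leq 1$ structure at any $V_{\Delta-1}$ vertex. This mirrors, but generalizes to arbitrary $\Delta$, the pigeonhole-style balancing used in the proof of Theorem \ref{prop:maxdegree5}.
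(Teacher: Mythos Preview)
Your treatment of the cases $\Delta-\delta\le 1$ and of the Class~1 case when $\Delta-\delta=2$ is fine and matches the paper. The Class~2 case, however, has a real gap: the ``rebalancing'' step is asserted but not executed. You acknowledge this yourself (``perhaps via a potential-function argument or via averaging over cyclic rotations''), and it is not clear that any combination of color permutations and Kempe swaps on the $\Delta$-coloring of $G-M$ can force every vertex of $V_{\Delta-2}$ to have its two missing colors either cyclically adjacent or touching $\{1,\Delta\}$. The pigeonhole trick from the proof of Theorem~\ref{prop:maxdegree5} exploited the very small color set and explicit finite case analysis; there is no evident way to make it go through for arbitrary $\Delta$, and without it your bound stays at $|V_{\Delta-1}|+2|V_{\Delta-2}|$, which can exceed $|V(G)|$.

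The paper avoids this difficulty by a different, purely structural route. Instead of removing one matching and then repairing, it removes \emph{three} matchings $M$, $M'$, $M''$ so that $G-\hat M$ (with $\hat M=M\cup M'\cup M''$) is $(k-1)$-edge-colorable and $G[\hat M]$ is $2$-edge-colorable; the colors $k$ and $k+1$ are then reserved for $\hat M$. Under this coloring every vertex in $V_k$ has deficiency $0$, every vertex in $V_{k-1}$ has deficiency at most $1$, and every vertex in $V_{k-2}$ has deficiency at most $1$ \emph{except} those in the set $U$ of degree-$(k-2)$ vertices hit by the minimum matching $M'$ (which may have deficiency $2$). The key counting step is that $|U|\le|V_k|$, because each edge of $M'$ covers a vertex of degree $k$ in $G-M$; this instantly yields $\mathrm{def}_c(G)\le 2|U|+|V_{k-2}\setminus U|+|V_{k-1}|\le |V(G)|$. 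No recoloring or balancing is needed---the excess of $2$ at the bad $V_{k-2}$ vertices is absorbed by the zeros at $V_k$ via the inequality $|U|\le|V_k|$.
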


\begin{proof} 
	By the above remark, we may assume that $\Delta(G) = \delta(G) +2$.
	Set $k =\Delta(G)$.
	
	If $G$ is Class 1, then it clearly has a proper $k$-edge
	coloring $\alpha$ such that for any vertex $S_G(v,\alpha)$ is 
	near-cyclic, implying that $\mathrm{def}_{c}(G)\leq |V(G)|$.
	Assume, consequently, that $G$ is Class $2$. 
	
	
	Let 
	$M$ be a maximum matching of $G[V_{k}]$.
	Set $H = G-M$. Note that in $H$ no pair of vertices
	of degree $k$ are adjacent. Let $M'$ be a minimum matching in $H$
	covering all vertices of degree $k$ in $H$; such a matching
	exists since $H$ is Class 1. Note that the graph  $J=H-M'$ has maximum
	degree at most $k-1$. Let $M''$ be a maximum matching in $J_{k-1}$,
	where $J_{k-1}$ is the subgraph of $J$ induced by the vertices of 
	degree $k-1$ in $J$.  Let $\hat M = M \cup M' \cup M''$.
	

	\begin{claim}
	\label{cl:GM}
				The graph $G[\hat M]$ is $2$-edge-colorable.
	\end{claim}
	\begin{proof}
		We first prove that $G[\hat M]$ has maximum degree $2$.
		Since $M$ is a maximum matching in $G[V_k]$, no edge of $M$
		is adjacent to an edge of $M'$. This implies that any vertex
		of $G$ is incident with at most two edges from $\hat M$.
		
		Now we prove that $G[\hat M]$ has no odd cycle. Since $H = G-M$ contains
		no pair of adjacent vertices of degree $k$, and every edge
		of $M'$ is incident with a vertex of degree $k$ in $H$,
		one of the ends of an edge in $M'$ has degree 
		at most $k-2$ in $J = H- M'$.
		Since $M''$ is a maximum matching in $J_{k-1}$, this means
		that no edge of $M'$ can be in a cycle of $G[\hat M]$.
		Thus, if $G[\hat M]$ contains a cycle $C$, then 
		$E(C) \subseteq M \cup M''$. However, $M$ and $M''$ are both matchings,
		so edges in $C$ lie alternately in $M$ and $M''$. Therefore
		$G[\hat M]$ contains no odd cycle.
	~$\square$
	\end{proof}
	
	\begin{claim}
	\label{cl:G-M}
				The graph $G - \hat M$ is $(k-1)$-edge-colorable.
	\end{claim}
	\begin{proof}
		As pointed out above, the graph $J = G- M \cup M'$ has maximum degree
		$k-1$, so the graph $G-\hat M$ has maximum degree at most $k-1$.
		If $G-\hat M$ has maximum degree $k-2$, then it is clearly
		$(k-1)$-edge-colorable. Suppose instead that $G - \hat M$ has maximum
		degree $k-1$. Since $M''$ is a maximum matching in $J_{k-1}$,
		no pair of vertices of degree $k-1$ are adjacent in $G-\hat M$.
		Hence, by Theorem \ref{th:Fournier}, $G$ is Class 1.
		~$\square$
	\end{proof}
	
	We continue the proof of Proposition \ref{prop:smalldiff}.
	Let $\psi$ be a proper $(k-1)$-edge coloring of $G -\hat M$
	using colors $1,\dots k-1$, and let $\varphi$ be a proper 
	$2$-edge coloring of $G[\hat M]$ using colors $k$ and $k+1$.
	Denote by $\alpha$ the coloring of $G$ that $\psi$ and $\varphi$
	yield.
	Denote by $U$ the set of vertices of degree $k-2$ in $G$
	that are adjacent to an edge of $M'$. If $v \in U$,
	then $\mathrm{def}_{c}(v,\alpha) \leq 2$, and if $v \in V_{k-2} \setminus U$,
	then $\mathrm{def}_{c}(v,\alpha) \leq 1$. Moreover, if $v \in V_{k-1}$,
	then $\mathrm{def}_{c}(v,\alpha) \leq 1$, and if $v \in V_{k}$,
	then $\mathrm{def}_{c}(v,\alpha) = 0$.
	Hence $$\mathrm{def}_{c}(G) \leq 2|U| + |V_{k-2} \setminus U|  + |V_{k-1}|.$$
	Since $|U| \leq |V_{k}|$, we deduce that
	$$\mathrm{def}_{c}(G) \leq |V_{k-2}| + |V_{k-1}|  + |V_{k}| = |V(G)|.$$
	 ~$\square$
	\end{proof}
\bigskip




\begin{acknowledgement}
We would like to thank Hrant Khachatrian for drawing the main figures in the paper. 
\end{acknowledgement}

\end{document}